\newtheorem{theorem}{Theorem}
\newtheorem{lemma}{Lemma}
\newtheorem{proposition}{Proposition}
\newtheorem{corollary}{Corollary}
\newtheorem{claim}{Claim}
\newtheorem{conjecture}{Conjecture}
\theoremstyle{definition}
\newtheorem*{definition}{Definition}
\newtheorem{example}{Example}
\def\N{\mathbb N}
\def\F{\mathbb F}
\def\C{\mathbb C}
\def \ed{\mathrm{ed}}
\newcommand{\mama}[1]{{\color{teal}{#1}}}
\title{Alon--Tarsi for hypergraphs}
\author[M.A.]{Marcin Anholcer$^\dag$}
\address{Institute of Informatics and Quantitative Economics, Pozna\'n University of Economics and Business, Pozna\'n, Poland}
\email{m.anholcer@ue.poznan.pl}
\author[B.B.]{Bartłomiej Bosek$^*$}
\address{Institute of Theoretical Computer Science, Faculty of Mathematics and Computer Science, Jagiellonian University, Kraków, Poland}
\email{bartlomiej.bosek@uj.edu.pl}
\author[G.G.]{Grzegorz Gutowski}
\address{Institute of Theoretical Computer Science, Faculty of Mathematics and Computer Science, Jagiellonian University, Kraków, Poland}
\email{grzegorz.gutowski@uj.edu.pl}
\author[M.L.]{Micha{\l} Laso{\'n}$^\star$}
\address{Institute of Mathematics of the Polish Academy of Sciences, Warszawa, Poland}
\email{michalason@gmail.com}
\author[J.P.]{Jakub Przybyło}
\address{AGH University of Krakow, Faculty of Applied Mathematics, Al. A. Mickiewicza 30, 30-059 Krakow, Poland}
\email{jakubprz@agh.edu.pl}
\author[O.S.]{Oriol Serra$^\ddag$}
\address{Department of Mathematics, Universitat Polit\`ecnica de Catalunya, Barcelona, Spain}
\email{oriol.serra@upc.edu}
\author[M.T.]{Michał Tuczyński}
\address{Faculty of Mathematics and Information Science, Warsaw University of Technology, Warsaw, Poland}
\email{michal.tuczynski@pw.edu.pl}
\author[L.V.C.]{Llu\'{i}s Vena$^\ddag$}
\address{Department of Mathematics, Universitat Polit\`ecnica de Catalunya, Barcelona, Spain}
\email{lluis.vena@upc.edu}
\author[M.Z.]{Mariusz Zając$^*$}
\address{Faculty of Mathematics and Information Science, Warsaw University of Technology, Warsaw, Poland}
\email{mariusz.zajac@pw.edu.pl}
\thanks{$^\dag$ Supported by Polish National Science Center grant no. 2023/51/B/HS4/00829.}
\thanks{$^*$ Supported by Polish National Science Center grant no. 2019/35/B/ST6/02472.}
\thanks{$^\star$ Supported by Polish National Science Center grant no. 2019/34/E/ST1/00087.}
\thanks{$^\ddag$ Supported by the Grant PID2023-147202NB-I00 funded by MICIU/AEI/10.13039/501100011033.}
\begin{document}

\begin{abstract}
Given a hypergraph $H=(V,E)$, define for every edge $e\in E$ a linear expression with arguments corresponding with the vertices. Next, let the polynomial $p_H$ be the product of such linear expressions for all edges. Our main goal was to find a relationship between the Alon--Tarsi number of $p_H$ and the edge density of $H$. We prove that $AT(p_H)=\lceil ed(H)\rceil+1$ if all the coefficients in $p_H$ are equal to $1$. Our main result is that, no matter what those coefficients are, they can be permuted within the edges so that for the resulting polynomial $p_H^\prime$, $AT(p_H^\prime)\leq 2\lceil ed(H)\rceil+1$ holds. We conjecture that, in fact, permuting the coefficients is not necessary. If this were true, then in particular a significant generalization of the famous 1-2-3 Conjecture would follow.\\

\noindent \textbf{Keywords:} Hypergraph polynomial, Combinatorial Nullstellensatz, Alon--Tarsi number, edge density, list coloring, online coloring, 1-2-3 Conjecture.\\

\noindent \textbf{MSC2020:} 05C65, 05C31, 05D40, 05C15.

\end{abstract}

\maketitle

\section{Introduction}

Let $G=(V, E)$ be a graph with the vertex set $V=[n]$. For an orientation $\vec{G}$ of the edges of $G$, the \emph{graph polynomial} of $\vec{G}$ is defined as the homogeneous multivariate polynomial $p_{\vec{G}}(x_1, \ldots, x_n)=\prod_{(i,j)\in E(\vec{G})}(x_j-x_i)$. One of the applications of this polynomial arises from the fact that $p_{\vec{G}}(a_1, \ldots, a_n)\neq 0$ for some $(a_1, \ldots, a_n)\in \N^n$ if and only if the coloring $\chi (i)=a_i$ is a proper coloring of $G$. Alon and Tarsi \cite{AT1992} developed an algebraic tool, which has become known as the Combinatorial Nullstellensatz (see Alon \cite{A1999}), to ensure that such a nonzero value of $p_{\vec{G}}$ can be found in a Cartesian product $A_1\times \cdots \times A_n$, providing an upper bound on the list chromatic number of $G$. If the expansion of $p_{\vec{G}}$ in some base field $\mathbb{F}$ is
$$
p_{\vec{G}}(x_1, \ldots, x_n)=\sum_{\alpha} c_{\alpha}x_1^{\alpha_1}\cdots x_n^{\alpha_n},
$$
where $\alpha=(\alpha_1, \ldots, \alpha_n)$ runs over $\N_0^n$, then the parameter of interest for the application of the Combinatorial Nullstellensatz, called the \emph{Alon--Tarsi number} of $G$ (see Jensen and Toft \cite{JT1995}) is 
$$
AT(G)=\min_{\alpha: c_{\alpha}\neq 0}\max \{\alpha_1, \ldots, \alpha_n\}+1.
$$
The Combinatorial Nullstellensatz ensures that $AT(G)$ is an upper bound for the list chromatic number of $G$ (for different proofs and generalizations of Combinatorial Nullstellensatz see \cite{Michalek, S2010, Lason}).  

The same framework has been extended to hypergraphs in the following way. Let $H=(V,E)$ be a (multi)hypergraph, where $E$ is a family of subsets of $V=[n]$. Every polynomial of the form
\begin{equation}\label{eq:hp}
p_H(x_1, \cdots ,x_n)=\prod_{e\in H}(\sum_{i\in e} a_{e,i}x_i),
\end{equation}
where $a_{e,i}\neq 0$ are taken from a base field $\mathbb{F}$, is called a {\it hypergraph polynomial} of $H$. A hypergraph polynomial is homogeneous of degree $|E|$. Following the above notation, if 
$$
p_{H}(x_1, \ldots, x_n)=\sum_{\alpha} c_{\alpha}x_1^{\alpha_1}\cdots x_n^{\alpha_n},
$$
is the expansion of the polynomial in $\mathbb{F}$, we call the Alon--Tarsi number of the polynomial $p_H$ the value
$$
AT(p_H)=\min_{\alpha: c_{\alpha}\neq 0}\max \{\alpha_1, \ldots, \alpha_n\}+1.
$$
Given a hypergraph $H$, its \emph{chromatic number} $\chi(H)$ is the minimum number of colors that allows a vertex coloring without monochromatic edges of size at least two. One can consider also two generalizations of the proper colorings defined this way. Assume that every vertex of $H$ is assigned with a list of at most $k$ colors. If for any such list assignment one can pick a color from every list so that in the obtained vertex coloring no edge of size at least two is monochromatic, then we say that $H$ is $k$-choosable. The smallest $k$ for which $H$ is $k$-choosable is called the list chromatic number of $H$ and denoted $\chi_L(H)$. In another variant of the problem (defined for graphs by Schauz \cite{Schauz Paint Correct}) the lists are not known in advance, but in a step-by-step procedure it is announced which vertices have certain color, say $c$, on their lists; the vertices to be colored $c$ need to be selected immediately and no recoloring is possible in the future. If there is a selection strategy such that all the vertices can be colored from their lists, we say that $H$ is $k$-online choosable (or $k$-paintable). The smallest $k$ for which $H$ is $k$-online choosable is called online choosability number (or paintability number) and denoted $\chi_P(H)$. Note that for every hypergraph $H$ we have $\chi(H)\leq \chi_L(H)\leq \chi_P(H)$. 

Ramamurthi and West \cite{RW2005} obtained a generalization of the Alon--Tarsi theorem \cite[Corollary 1.4]{AT1992}, relating Eulerian orientations of a graph with the list chromatic number, to $k$-uniform hypergraphs (whose all edges contain $k$ vertices) for $k$ being a prime. To this end they use a hypergraph polynomial over $\C$ where the coefficients of the variables are the $k$-th roots of unity. A similar framework was used by Schauz \cite{S2010} to obtain upper bounds on the online choosability number of $k$-uniform and $k$-partite hypergraphs. Hefetz \cite{H2011} introduced an extension of the Combinatorial Nullstellensatz which can also be applied to non-necessarily uniform hypergraphs. All these results were reformulated and presented in a unified form in the monograph of Zhu and Balakrishnan \cite[Section 3.9]{ZhuBal}. It is worth mentioning, that the list colorability of hypergraphs has been investigated also using methods other than Combinatorial Nullstellensatz, in particular by various variants of the Probabilistic Method \cite{AlonSpencer}. To be more specific, the hypergraphs with high chromatic number were studied by Wang and Qian \cite{WanQ}. Lower bounds on the list chromatic number of simple hypergraphs were studied by Saxton and Thomason \cite{SaxTho} and by Alon and Kostochka \cite{AlonKos}. The same authors \cite{AlonKos2} analyzed the lower bounds for dense uniform hypergraphs, Haxell and Verstraete \cite{HaxVer} focused on the regular $3$-uniform hypergraphs, while M\'eroueh and Thomason \cite{MerTho} on the random $r$-partite hypergraphs. The list chromatic number of Steiner Triple Systems was analyzed by Haxell and Pei \cite{HaxPei}. Wang, Qian and Yan \cite{WQY} investigated the number of list colorings of uniform hypergraphs.


Our motivation comes from the study of the Alon--Tarsi number of planar graphs. Zhu \cite{Z2019} proved that the Alon--Tarsi number of planar graphs is at most five, giving an alternate proof of the celebrated theorem by Thomassen \cite{T1994} of the $5$-choosability of this class of graphs, as well as their $5$-paintability. 

Our main goal is to obtain an upper bound on the Alon--Tarsi number of a hypergraph polynomial over an arbitrary field $\mathbb{F}$ in terms of the edge density of the hypergraph. Recall that the edge density of a hypergraph $H=(V,E)$ is defined as 
$$
{\rm ed}(H)=\max_{\emptyset\neq X\subseteq V} \frac{|E(X)|}{|X|},
$$
where $E(X)=\{e\in E(H): e \subseteq X\}$ denotes the set of edges of the subhypergraph $H[X]$ of $H$ induced by $X\subseteq V$.  We say that a hypergraph polynomial of $H=([n],E)$
$$
p(x_1, \ldots, x_n)=\prod_{e\,\in\, E} \big( \sum_{i\,\in\,e} a_{e,i}\,x_{i} \big) \in \F[x_1, \ldots, x_n].
$$
over $\F$ is {\it fully unbalanced} if, for every edge $e\in E$, there exist $i, j\in e$ such that $a_{e,i} \neq a_{e,j}$. 

For permutations of edges $\sigma_{e_1}\!\in S_{e_1},$ $\sigma_{e_2}\!\in S_{e_2},$ $\ldots,$ $\sigma_{e_m}\!\in S_{e_m}$, the \emph{permuted polynomial} is defined as 
\[
p_{\sigma_{e_1}\sigma_{e_2}\ldots\sigma_{e_m}}(x_1, \ldots, x_n)=\prod_{i\in[m]} \big( \sum_{v\,\in\,e_i} a_{e_i,\sigma_{e_i}(v)}\,x_{v} \big) \in \F[x_1, \ldots, x_n].
\]
In other words, it is constructed from the original polynomial by permuting the coefficients (but not the variables) in every bracket.

The main result of this paper is the following theorem.
\begin{restatable}{theorem}{thmMain}\label{thm:main}
For every hypergraph $H=(V,E)$ and fully unbalanced hypergraph polynomial $p$ of $H$ there exist permutations of edges $\sigma_{e_1}\!\in S_{e_1}$ $\sigma_{e_2}\!\in S_{e_2},$ $\ldots,$ $\sigma_{e_m}\!\in S_{e_m}$ such that
$$
AT(p_{\sigma_{e_1}\sigma_{e_2}\ldots\sigma_{e_m}}) \leqslant 2 \cdot \lceil {\rm ed}(H) \rceil +1.
$$
\end{restatable}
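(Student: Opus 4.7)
The plan is to reduce the theorem to the classical Alon--Tarsi bound for an auxiliary multigraph, via a telescoping identity involving transpositions of coefficients.

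For each $e\in E$, pick distinct $u_e,v_e\in e$ with $a_{e,u_e}\neq a_{e,v_e}$ as follows. Let $\phi\colon E\to V$ be a Hakimi-type orientation with $\phi(e)\in e$ and $|\phi^{-1}(v)|\leq\lceil\ed(H)\rceil$ (which exists by the classical hypergraph orientation theorem), set $u_e=\phi(e)$, and pick any $v_e\in e\setminus\{u_e\}$ with $a_{e,v_e}\neq a_{e,u_e}$; such $v_e$ exists because otherwise all coefficients on $e$ would coincide, contradicting full unbalance. Writing $\tau_e=(u_e\,v_e)\in S_e$, the telescoping identity
\[
L_e - L_e^{\tau_e} = (a_{e,u_e}-a_{e,v_e})(x_{u_e}-x_{v_e}) = c_e(x_{u_e}-x_{v_e}), \quad c_e\neq 0,
\]
leads, after taking the product over $e$ and expanding in two ways, to
\[
\Big(\prod_{e\in E}c_e\Big)\prod_{e\in E}(x_{u_e}-x_{v_e}) = \prod_{e\in E}(L_e-L_e^{\tau_e}) = \sum_{T\subseteq E}(-1)^{|T|}\,p_{\sigma(T)},
\]
where $\sigma(T)$ applies $\tau_e$ for $e\in T$ and the identity elsewhere. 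The leftmost product is, up to the nonzero scalar $\prod c_e$, the graph polynomial $p_{\vec{G}^*}$ of the multigraph $G^*=(V,\{u_ev_e:e\in E\})$ oriented $v_e\to u_e$.

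Since $u_e=\phi(e)$, for every $X\subseteq V$ we have $|E(G^*[X])|\leq|\{e:u_e\in X\}|=\sum_{v\in X}|\phi^{-1}(v)|\leq\lceil\ed(H)\rceil\cdot|X|$, hence $\ed(G^*)\leq\lceil\ed(H)\rceil$. The degeneracy-based Alon--Tarsi bound $AT(G)\leq\mathrm{dgn}(G)+1\leq\lfloor 2\,\ed(G)\rfloor+1$ (valid also for multigraphs, since orienting along a degeneracy ordering gives an acyclic digraph for which the Eulerian sub-digraph counts satisfy $EE=1\neq 0=OE$) then yields $AT(p_{\vec{G}^*})\leq 2\lceil\ed(H)\rceil+1$. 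Hence some monomial $M$ with all exponents at most $2\lceil\ed(H)\rceil$ has a nonzero coefficient in $p_{\vec{G}^*}$, and comparing coefficients of $M$ in the identity above forces $M$ to have a nonzero coefficient in at least one summand $p_{\sigma(T)}$, giving $AT(p_{\sigma(T)})\leq 2\lceil\ed(H)\rceil+1$ as claimed. The main obstacle, I expect, is verifying the multigraph version of the degeneracy-based Alon--Tarsi bound carefully (handling parallel edges rigorously) and checking that the choice of $\phi$ and of the $v_e$'s respects the fully unbalanced hypothesis on every edge simultaneously; individually each ingredient is standard.
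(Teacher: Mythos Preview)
Your argument is correct and is essentially the paper's second proof of the theorem (via Lemma~\ref{lem:comp}): the telescoping identity $\prod_e(L_e-L_e^{\tau_e})=\sum_T(-1)^{|T|}p_{\sigma(T)}$ is exactly the single-edge span relation of Claim~\ref{claim2} multiplied out, and your choice of $\phi$ together with the density/degeneracy bound reproduces Lemma~\ref{lem:H-G} combined with Theorem~\ref{thm:max-e}. The paper also gives a more constructive inductive proof (Lemma~\ref{lem:main}) that pins down a specific monomial and specific permutations rather than arguing by linear combination, but your route coincides with the alternative one.
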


We note that, for every collection of permutations $\sigma=(\sigma_e:e\in E)$ and every hypergraph polynomial $p$ of a hypergraph $H$, $p_{\sigma}$ as defined in the statement of Theorem \ref{thm:main} is again a hypergraph polynomial of $H$. In particular, the bound in the Theorem also provides a bound on the list chromatic number (choosability) and the online list chromatic number (paintability) of the hypergraph (see Section \ref{sec:app}). The bound is tight as, for example, the Fano plane has edge density $1$ and chromatic number $3$.

Observe that the Alon--Tarsi number of a polynomial may change when permuting the coefficients within the edges as illustrated in the following examples.

\begin{example}  Consider the tetrahedron hypergraph $H$, with the vertex set $V(H)=[4]$ and the edge set $E(H)=\{e_1=\{1,2,3\},e_2=\{1,2,4\},e_3=\{1,3,4\},e_4=\{2,3,4\}\}$. For the hypergraph polynomial
\begin{align*}
p_H(x_1,x_2,x_3,x_4)=&(x_1+\omega x_2+\omega^2 x_3)(x_1+\omega x_2+\omega^2 x_4)\\
&(x_3+\omega x_1+\omega^2 x_4)(x_2+\omega x_3+\omega^2 x_4),
\end{align*}
where $\omega=\frac{1}{2}(-1+i\sqrt{3})$ is a third root of unity, $AT(p_H)=3$. Indeed, one can verify that $\mathbf{coeff}(p_H,x_1x_2x_3x_4)=0$, so $AT(p_H)\geq 3$, while e.g. $\mathbf{coeff}(p_H,x_1^2x_2^2)=-1-i\sqrt{3}\neq 0$). However, if one swaps the coefficients as to obtain the polynomial
\begin{align*}
p^\prime_H(x_1,x_2,x_3,x_4)=&(x_1+\omega x_2+\omega^2 x_3)(x_1+\omega x_2+\omega^2 x_4)\\
&(x_1+\omega x_3+\omega^2 x_4)(x_2+\omega x_3+\omega^2 x_4),
\end{align*}
it comes out that  $AT(p^\prime_H)=2$, since $\mathbf{coeff}(p^\prime_H,x_1x_2x_3x_4)=-3+3\sqrt{3}i\neq 0$.
\end{example}

\begin{example}
    A smaller example can be exhibited by allowing less regularity in the coefficients. Consider the hypergraph  polynomial on the complete graph $K_3$ 
$$
p_H(x_1,x_2,x_3)=(x_1+ x_2)(2 x_2+ x_3) (x_3 - 2 x_1).
$$
One can verify that  $AT(p_H)= 3$, because $\mathbf{coeff}(p_H,x_1x_2x_3)=0$ and $\mathbf{coeff}(p_H,x_1x_3^2)=1 \neq 0$.  However, for the polynomial 
$$
p^\prime_H(x_1,x_2,x_3)=(x_1+ x_2)(x_2+ 2 x_3) (x_3 - 2 x_1)
$$
obtained by swapping coefficients in $p_H$, we have $\mathbf{coeff}(p^\prime_H,x_1x_2x_3)=-3\neq 0$ and in consequence $AT(p^\prime_H)=2$.
\end{example}

We conjecture that the statement of Theorem \ref{thm:main} can be extended to any hypergraph polynomial:

\begin{restatable}{conjecture}{conMain}\label{conj:1} For every hypergraph polynomial $p$ of a hypergraph $H=(V,E)$,
$$
AT(p)\le 2 \cdot \lceil {\rm ed}(H) \rceil +1.
$$
\end{restatable}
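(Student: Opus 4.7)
The plan is to sharpen Theorem \ref{thm:main} so that the edge permutations $\sigma_{e_i}$ become unnecessary. Write $d = \lceil {\rm ed}(H) \rceil$. For any monomial $\prod_v x_v^{\alpha_v}$ of total degree $|E|$, the coefficient in $p$ equals
\[
c_\alpha \;=\; \sum_{\phi} \prod_{e \in E} a_{e, \phi(e)},
\]
where the sum runs over all \emph{choice functions} $\phi: E \to V$ with $\phi(e) \in e$ and $|\phi^{-1}(v)| = \alpha_v$ for every $v$. Permuting coefficients within edges simply reweights these terms; the goal is to exhibit a fixed $\alpha$ with $\max_v \alpha_v \leqslant 2d$ and $c_\alpha \ne 0$ \emph{without} using such reweightings.

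First, I would establish a Hakimi-type orientation lemma: since ${\rm ed}(H) \leqslant d$, Hall's theorem applied to the bipartite incidence graph between edges and $d$ copies of each vertex yields a choice function $\phi_0$ with $\max_v |\phi_0^{-1}(v)| \leqslant d$. Applying this to a bipartite double cover of $H$ and projecting back (or combining two independent such $\phi_0$) produces a candidate degree sequence $\alpha$ with $\max_v \alpha_v \leqslant 2d$, $\sum_v \alpha_v = |E|$, and admitting many distinct choice functions $\phi$ realising it, which gives hope that the sum defining $c_\alpha$ does not trivially collapse.

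The bulk of the work is showing $c_\alpha \neq 0$. I would induct on $|E|$: remove an edge $e^*$, write $L_{e^*} = \sum_{i \in e^*} a_{e^*,i} x_i$, and split
\[
c_\alpha \;=\; \sum_{i \in e^*} a_{e^*,i} \cdot \mathbf{coeff}\!\Big(p/L_{e^*},\; \prod_v x_v^{\alpha_v - \delta_{v,i}}\Big).
\]
By induction the residual polynomial $p/L_{e^*}$ has small Alon--Tarsi number, so by tuning the removed edge $e^*$ and the sequence $\alpha$ one can force several residual coefficients to be non-zero; it then remains to check that the above linear combination does not cancel. This is precisely where the proof of Theorem \ref{thm:main} exploited permutations of the $a_{e,i}$; without that freedom one has to turn the full-unbalancedness assumption (which survives at every level of the induction) into a combinatorial criterion on the choice of $e^*$ and of the target sequence $\alpha$.

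The main obstacle I foresee is the cancellation control in the inductive step. A purely local argument seems inadequate, and I expect one will need either a new global algebraic identity, perhaps an analogue of the permanent/determinant identity underlying the classical Alon--Tarsi theorem for graphs, or a second-moment argument establishing $\sum_\alpha c_\alpha^2 > 0$ over a carefully chosen family of admissible degree sequences with $\max_v \alpha_v \leqslant 2d$. The intrinsic difficulty is underscored by the authors' observation that Conjecture \ref{conj:1} strictly generalises the 1-2-3 Conjecture, so any successful approach must, at least implicitly, resolve that long-standing open problem.
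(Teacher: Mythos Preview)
The statement you are attempting to prove is not a theorem in the paper: it is Conjecture~\ref{conj:1}, explicitly left open, and the authors stress that a proof would yield the list version of the 1-2-3 Conjecture, which is itself open. There is therefore no ``paper's own proof'' to compare your attempt against.

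Your proposal is not a proof either, and to your credit you say so. The orientation lemma you sketch is essentially the content of Lemmas~\ref{obs:fullybal-den} and~\ref{lem:H-G}, and the edge-removal induction mirrors the structure of Lemma~\ref{lem:main}. The decisive step, however, is exactly the one you flag as an obstacle: in the paper's Lemma~\ref{lem:main}, when the inductive coefficient $c$ vanishes, the authors apply a transposition $\sigma_{e_m}=(k\,l)$ and compute $c'-c=(a_{e_m,k}-a_{e_m,l})(b_l-b_k)\neq 0$. This is precisely the permutation freedom that Conjecture~\ref{conj:1} forbids, and your write-up offers no replacement mechanism beyond the hope that ``a new global algebraic identity'' or ``a second-moment argument'' might work. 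Neither is carried out, so what you have is a restatement of the difficulty rather than a resolution of it.

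One further point: you repeatedly invoke ``the full-unbalancedness assumption (which survives at every level of the induction)'', but Conjecture~\ref{conj:1} makes no such assumption; it is stated for \emph{every} hypergraph polynomial. If you intend to restrict to the fully unbalanced case you are targeting the weaker conjecture in Section~\ref{sec:final}, not Conjecture~\ref{conj:1} itself, and even there the cancellation problem remains untouched by your outline.
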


The truth of Conjecture \ref{conj:1} would provide, as observed by Grytczuk \cite{GrytczukPersonal}, a simple alternate proof of the longstanding 1-2-3 Conjecture of Karo{\'n}ski, {\L}uczak and Thomason, recently confirmed by Keusch \cite{K2023}, even in its list version which is currently open.  We discuss this topic and further applications of our main result in Section \ref{sec:app}. 

The paper is organized as follows. In Section \ref{sec:prel} we give more detailed definitions and provide a preliminary bound on the Alon--Tarsi number of hypergraph polynomials. Section \ref{sec:main} contains the proof of Theorem \ref{thm:main}. Section \ref{sec:app} discusses some applications of the Alon--Tarsi number of hypergraph polynomials. We conclude the paper with some final remarks including open problems.

\section{Preliminaries}\label{sec:prel}

Let $\mathbb{F}$ be a field and let
$$
p(x_1,\ldots, x_n)=\sum_{\alpha} c_{\alpha} x_1^{\alpha_1}\cdots x_n^{\alpha_n}
$$
be a polynomial in $\mathbb{F}[x_1, \ldots, x_n]$, where the sum is over all sequences $\alpha=(\alpha_1, \ldots, \alpha_n)$ with non-negative integer entries. Let 
$$
S(p)=\{\alpha: \sum_{i}\alpha_i=\text{deg}\; p, c_{\alpha}\neq 0\},
$$
denote the set of sequences corresponding with the monomials of maximum degree appearing in $p$. The {\it Alon--Tarsi number} of $p$ is
$$
AT(p)=\min_{\alpha\in S(p)}\max \,\{\alpha_1, \ldots, \alpha_n\} +1.
$$

Let $H=(V, E)$ be a hypergraph with set of edges $E$, where each edge $e\in E$ is a nonempty subset of $V$. We allow multiple edges in $H$ but still refer to it as a hypergraph instead of multihypergraph, and emphasize with the word {\it simple} if multiple edges are not allowed. However, we do not consider loops: all edges are of size at least two. We consider finite hypergraphs and we will always assume that the set of vertices is $V=[n]=\{1,2, \ldots, n\}$, if not stated otherwise. 

We recall that a hypergraph polynomial over $\F$ of $H$ is any polynomial of the form
$$
p(x_1, \ldots, x_n)=\prod_{e\,\in\, E} \big( \sum_{i\,\in\,e} a_{e,i}\,x_{i} \big) \in \F[x_1, \ldots, x_n],
$$
for some choice of coefficients $a_{e,i}\neq 0$. If for a given polynomial $p$ there is a hypergraph $H$ for which $p$ is a hypergraph polynomial of $H$, then we will call $p$ a hypergraph polynomial.
Recall that the hypergraph polynomial $p$ is said to be fully unbalanced if, for every $e\in E$, there exist $i, j\in e$ such that $a_{e,i} \neq a_{e,j}$.
On the other hand, the hypergraph polynomial $p$ is \emph{fully balanced} if all the coefficients $a_{e, i}$ are equal in every edge $e$, and then they can be all assumed to be equal to one up to scalar multiplication. We denote the fully balanced hypergraph polynomial of $H$ by $\overline{p_H}$, namely
$$
\overline{p_H}(x_1, \ldots, x_n)=\prod_{e\,\in\, E} \big( \sum_{i\,\in\,e} x_{i} \big) \in \F[x_1, \ldots, x_n].
$$
Our main objective is to relate the Alon--Tarsi number of a hypergraph polynomial with the edge density of the hypergraph, see Theorem \ref{thm:main}. For each nonempty subset $X\subseteq V$, we denote by $H[X]$ the subhypergraph of $H$ induced by the vertices in $X$, and by $E(X)=E(H[X])$ its set of edges. The edge density of $H$ is
$$
\ed (H)=\max_{\emptyset\neq X\subseteq V}\frac{|E(X)|}{|X|}.
$$

The edge density provides a natural lower bound for the  Alon--Tarsi number of hypergraph polynomials.

\begin{lemma}\label{rem:edlowerbound}
For every hypergraph polynomial $p$ we have
\begin{equation*}\label{eq:edlowerbound} AT(p)\ge \lceil ed(H) \rceil +1.
\end{equation*}
\end{lemma}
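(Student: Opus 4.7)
The plan is to expand $p = \prod_{e\in E}(\sum_{i\in e} a_{e,i} x_i)$ factor by factor and exploit the combinatorial structure of how monomials arise. Each monomial $x_1^{\alpha_1}\cdots x_n^{\alpha_n}$ of maximum degree appearing in the expansion is obtained by making, for every edge $e\in E$, a choice of one vertex $v(e)\in e$; the contribution to the monomial is the product of the coefficients $a_{e,v(e)}$, and the resulting exponent of $x_i$ equals the number of edges for which $v(e)=i$. Thus every exponent vector $\alpha$ with $c_\alpha\neq 0$ must be realizable by at least one such selection (otherwise $c_\alpha$ would be an empty sum).

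Next I would fix a nonempty $X\subseteq V$ attaining $\ed(H)=|E(X)|/|X|$, and localize the bound to $X$. The key observation is that for any realization $v\colon E\to V$ of a monomial with exponent vector $\alpha$, every edge $e\subseteq X$ satisfies $v(e)\in X$ automatically, so $\sum_{i\in X}\alpha_i\geq |E(X)|$. An averaging step then yields
$$
\max_{i\in V}\alpha_i \;\geq\; \max_{i\in X}\alpha_i \;\geq\; \frac{\sum_{i\in X}\alpha_i}{|X|} \;\geq\; \frac{|E(X)|}{|X|}\;=\;\ed(H),
$$
and this holds for every $\alpha\in S(p)$. Taking the minimum over $S(p)$ gives $AT(p)\geq \ed(H)+1$, and since $AT(p)$ is an integer one concludes $AT(p)\geq\lceil \ed(H)\rceil +1$.

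There is no real obstacle here, since no cancellation argument is needed: the lower bound on $\max\alpha_i$ comes from counting alone and applies uniformly to \emph{every} realization contributing to $c_\alpha$, so the potentially delicate question of whether the coefficient survives cancellation is irrelevant. The only point one has to be slightly careful about is the passage from the real bound $\ed(H)+1$ to the integer bound $\lceil\ed(H)\rceil+1$; this uses nothing more than $\lceil x+1\rceil=\lceil x\rceil+1$ for any real $x$, combined with integrality of $AT(p)$.
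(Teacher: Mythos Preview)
Your proof is correct and follows essentially the same route as the paper: both arguments observe that any monomial with nonzero coefficient arises from a choice function $v\colon E\to V$ with $v(e)\in e$, note that edges contained in $X$ are forced to contribute to variables in $X$, and then bound the maximal exponent from below by $\ed(H)$ via a counting/averaging step before invoking integrality. The only cosmetic difference is that the paper proves $|E(X)|\le k'|X|$ for every $X$ while you fix a maximizing $X$ and average, but these are two phrasings of the same inequality.
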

\begin{proof}
Let us take any monomial $\prod_{e\in E} x_{r(e)}$ in the expansion of $p$, where $r(e)$ is a variable (vertex) chosen from the bracket corresponding to the edge $e$.
Let $k':=\max_{j\in V}|r^{-1}(j)|$ denote the maximum power of a variable in $\prod_{e\in E}x_{r(e)}$. Let us show that $k'\geqslant {\rm ed}(H)$. This will complete the proof because $k'$ is an integer and  represents
the maximum power in any monomial, in particular in the one that minimizes this value.

Let us take any $X\subseteq V$.
Because $r(e)\in e$ for each $e \in E$ we have $r(E(X))\subseteq X$. Hence, $E(X)\subseteq r^{-1}(X)$ and so
$$
|E(X)|\leqslant |r^{-1}(X)|= |\bigcup_{i\in X} r^{-1}(i)|\leqslant \sum_{i\in X}|r^{-1}(i)| \leqslant k'\cdot |X|.
$$
Thus, ${\rm ed}(H)=\max_{\emptyset \neq X\subseteq V}\frac{|E(X)|}{|X|}\leqslant k'$. This completes the proof.
\end{proof}

In the case of fully balanced polynomials over fields of characteristic zero also the opposite inequality is true.

\begin{lemma}\label{obs:fullybal-den} 
For the fully balanced hypergraph polynomial $\overline{p_H}$ over a field of characteristic zero of a hypergraph $H$  we have
\[
AT(\overline{p_H}) = \lceil {\rm ed}(H) \rceil +1.
\]
\end{lemma}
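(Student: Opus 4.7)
The lower bound $AT(\overline{p_H}) \geq \lceil \mathrm{ed}(H) \rceil + 1$ is already supplied by Lemma \ref{rem:edlowerbound}, so the task reduces to producing a monomial in the expansion of $\overline{p_H}$ whose maximum exponent equals $\lceil \mathrm{ed}(H) \rceil$. When $\overline{p_H} = \prod_{e \in E}\sum_{i \in e}x_i$ is expanded, each monomial arises from a choice function $r : E \to V$ with $r(e) \in e$, producing $\prod_{e \in E} x_{r(e)}$. Two different choice functions give the same monomial exactly when they induce the same exponent vector $\alpha_v = |r^{-1}(v)|$, and over a field of characteristic zero the coefficient of such a monomial is a strictly positive integer (the number of such $r$). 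Hence every exponent vector that can be realized by some $r$ yields a nonzero coefficient, and the proof reduces to a purely combinatorial minimization: find $r : E \to V$ with $r(e) \in e$ minimizing $\max_{v \in V} |r^{-1}(v)|$.

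The plan is to show this minimum is exactly $\lceil \mathrm{ed}(H) \rceil$ via a Hall-type argument on the bipartite incidence graph $B$ with parts $E$ and $V$, in which $e \in E$ is adjacent to $v \in V$ iff $v \in e$. For a fixed integer $k$, the existence of a choice function $r$ with $|r^{-1}(v)| \leq k$ for all $v$ is equivalent to the existence of an $E$-saturating many-to-one matching in $B$ with capacity $k$ on the $V$-side. By the capacitated version of Hall's theorem (equivalently, max-flow / min-cut applied to the obvious source-sink network with unit capacities from source to $E$, incidence capacities of $1$ from $E$ to $V$, and capacity $k$ from $V$ to sink), such an $r$ exists if and only if $|N_B(F)| \geq |F|/k$ for every $F \subseteq E$.

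The key identification is
\[
\max_{\emptyset \neq F \subseteq E} \frac{|F|}{|N_B(F)|} = \mathrm{ed}(H).
\]
For one direction, given $F$, put $X = N_B(F)$; then every edge in $F$ is contained in $X$, so $F \subseteq E(X)$ and $|F|/|N_B(F)| \leq |E(X)|/|X| \leq \mathrm{ed}(H)$. Conversely, for $X$ attaining $\mathrm{ed}(H)$, take $F = E(X)$; then $N_B(F) \subseteq X$, giving $|F|/|N_B(F)| \geq |E(X)|/|X| = \mathrm{ed}(H)$. Plugging this into the Hall condition with $k = \lceil \mathrm{ed}(H) \rceil$ shows a choice function $r$ of maximum load at most $\lceil \mathrm{ed}(H) \rceil$ exists.

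For such an $r$, the monomial $\prod_{e \in E} x_{r(e)}$ has exponent vector $\alpha$ with $\max_v \alpha_v \leq \lceil \mathrm{ed}(H) \rceil$, and by the positivity remark above its coefficient in $\overline{p_H}$ is a positive integer, hence nonzero in characteristic zero. This gives $AT(\overline{p_H}) \leq \lceil \mathrm{ed}(H) \rceil + 1$ and closes the equality. The main obstacle is really only the bipartite duality step; the characteristic-zero hypothesis is used exactly once, to rule out cancellation in the count of choice functions, and it is essential: over fields of positive characteristic the coefficient could vanish modulo $p$ even though it is combinatorially positive.
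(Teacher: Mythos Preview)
Your proof is correct and follows essentially the same approach as the paper: both reduce the upper bound to finding a choice function $r:E\to V$ with $r(e)\in e$ and $\max_v |r^{-1}(v)|\le \lceil \mathrm{ed}(H)\rceil$, obtain it via a Hall-type argument, and then use characteristic zero to guarantee the corresponding monomial has a nonvanishing (positive integer) coefficient. The only cosmetic difference is that the paper carries out the Hall step by cloning each vertex $k$ times and applying ordinary Hall, whereas you invoke the capacitated version (equivalently max-flow/min-cut) directly together with the clean duality $\max_{\emptyset\neq F\subseteq E}|F|/|N_B(F)|=\mathrm{ed}(H)$.
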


\begin{proof} Observe that $\overline{p_H}$ is a sum of monomials of the form
$$
\prod_{e\in H} x_{r(e)},
$$
where $r:E\to V$ is a function satisfying $r(e)\in e$ for all $e\in E$, whose coefficients are positive integers.

We will first prove that $AT(\overline{p_H}) \leqslant \lceil {\rm ed}(H) \rceil +1$.
Let us denote $k:=\lceil {\rm ed}(H) \rceil$.
For any subset $Y\subseteq V$, since
$$
\frac{|E(Y)|}{|Y|} \leqslant \max_{X\subseteq V} \frac{|E(X)|}{|X|} = {\rm ed}(H) \leqslant \lceil {\rm ed}(H) \rceil = k.
$$
we have $$|E(Y)| \leqslant k\cdot|Y|.$$
Based on the hypergraph $H=(V,E)$, let us define the hypergraph $H'=(V'=V\times [k],E'=\{e\times [k]: e\in E\})$, that is, every vertex $v$ of $H$ is split into $k$ copies $(v,1),\ldots,(v,k)\in V'$, and each edge $e'=e\times [k] \in E'$ contains all clones of the vertices in $e \in E$. Note that $|E|=|E'|$ and $|V'|=k\cdot |V|$.

Let us show that $H'$ admits a system of representatives by checking that Hall's condition is satisfied. Let
$
\{e_{1}',\ldots,e_{s}'\}\subseteq E'
$
be any family of edges in $H'$.
Then
\begin{align*}
|\{e_{1}',\ldots,e_{s}'\}|&=|\{e_{1},\,\ldots,\,e_{s}\}|\\
&\le |E(e_1\cup\ldots \cup e_s)|\\
&\le k\cdot |e_1\cup\ldots \cup e_s|\\
&=|(e_1\times [k])\cup\ldots \cup (e_s\times [k])|
=|e'_1\cup \ldots \cup e'_s|.
\end{align*}
Therefore, by Hall's theorem, there is an injection $r': E' \rightarrow V'$ such that $r'(e')\in e'$ for each $e'\in E'$.
Let us define the function $r:E\rightarrow V$ by setting $r(e):=j$ if $r'(e')=(j,l)$ for some $l\in[k]$.
Because $|r^{-1}(j)|\leqslant k$ for every $j\in V$, and the characteristic of $\F$ is $0$, the monomial $x_{r(e_1)}\cdot x_{r(e_2)}\cdot \ldots \cdot x_{r(e_m)}$ is a witness to $AT(\overline{p_H})\leqslant k+1$, what was to be  proved.
\end{proof}

\begin{example}
Note that in the proof we used the fact that the field $\F$ has characteristic $0$, which implies that the coefficient of the  monomial $x_{r(e_1)}\cdot x_{r(e_2)}\cdot \ldots \cdot x_{r(e_m)}$ does not vanish. The lemma does not need to be true in the case of a finite field. For example, in the case of the field $\mathbb{F}_2$ and $H=C_n$ being the cycle on $n$ vertices, we have $AT(\overline{p_H}) =3> \lceil {\rm ed}(H) \rceil +1$.
\end{example}

Let us define the \emph{degeneracy} of a hypergraph $H=(V,E)$ as
\begin{equation}\label{eq:deg-first}
\delta(H):=\max_{X\subseteq V} \min_{i\in X} d_{H[X]}(i),
\end{equation}
where $d_{H[X]}(i)=|\{e\in E: i\in e \subseteq X \}|$ is the degree of vertex $i$ in $H[X]$.

Similarly to the case of graphs, it is easy to show that the degeneracy of a hypergraph $H=(V,E)$, where $V=[n]$, can be equivalently defined as
\begin{equation}\label{eq:deg-second}
\delta(H) = \min_{\sigma\in S_n} \max _{i\in[n]} |\{e \in E(\{\sigma(1), \ldots, \sigma (i)\}): \sigma(i) \in e \}|,
\end{equation}
where the minimum runs over all the permutations of $[n]$. As in the case of graphs, we can also relate density and degeneracy in hypergraphs.
\begin{lemma}\label{lem:deg-den}
For every hypergraph $H=(V,E)$ we have
$$
\delta(H) \leqslant \max_{e\in E} |e| \cdot {\rm ed}(H).
$$
\end{lemma}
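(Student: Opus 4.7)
The plan is to exploit the first definition of degeneracy, $\delta(H)=\max_{X\subseteq V}\min_{i\in X}d_{H[X]}(i)$, together with a double-counting argument on vertex-edge incidences in an extremal induced subhypergraph. The key observation is that both sides of the claimed inequality admit natural comparisons with the incidence sum $\sum_{i\in X}d_{H[X]}(i)=\sum_{e\in E(X)}|e|$ for a suitable $X$.

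First, I would fix a subset $X^{*}\subseteq V$ that attains the maximum in the definition of $\delta(H)$, so that $\min_{i\in X^{*}}d_{H[X^{*}]}(i)=\delta(H)$. Summing this lower bound over vertices of $X^{*}$ gives
$$
\delta(H)\cdot|X^{*}|\;\leq\;\sum_{i\in X^{*}}d_{H[X^{*}]}(i).
$$
Next, I would rewrite the right-hand side by switching the order of summation: every edge $e\in E(X^{*})$ is counted exactly $|e|$ times, one for each of its vertices. Hence
$$
\sum_{i\in X^{*}}d_{H[X^{*}]}(i)\;=\;\sum_{e\in E(X^{*})}|e|\;\leq\;\max_{e\in E}|e|\cdot|E(X^{*})|.
$$

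Combining the two displayed inequalities and dividing by $|X^{*}|$ yields
$$
\delta(H)\;\leq\;\max_{e\in E}|e|\cdot\frac{|E(X^{*})|}{|X^{*}|}\;\leq\;\max_{e\in E}|e|\cdot\operatorname{ed}(H),
$$
where the last step applies the definition $\operatorname{ed}(H)=\max_{\emptyset\neq X\subseteq V}|E(X)|/|X|$ together with the fact that $X^{*}$ is nonempty (which holds since $\delta(H)\geq 0$ is witnessed by a nonempty set, unless $E=\emptyset$, in which case the inequality is trivial).

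There is really no substantial obstacle here: the argument is a one-line double count once the correct $X^{*}$ is selected, and the only thing to be slightly careful about is the edge case $E=\emptyset$ (handled trivially) and the nonemptiness of the extremal $X^{*}$ needed to invoke the definition of edge density. No appeal to the alternative characterization \eqref{eq:deg-second} is required.
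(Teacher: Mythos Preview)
Your proof is correct and essentially identical to the paper's: both select a witness set $X^{*}$ (called $Y$ in the paper) realizing the maximum in the first definition of $\delta(H)$, then double-count vertex--edge incidences in $H[X^{*}]$ to sandwich $\delta(H)\cdot|X^{*}|$ between the incidence sum and $\max_{e\in E}|e|\cdot|E(X^{*})|$. Your handling of the trivial case $E=\emptyset$ and the nonemptiness of $X^{*}$ is slightly more explicit than the paper's, but otherwise the arguments coincide.
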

\begin{proof}
Let $Y\subseteq V$ be a witness to the degeneracy value of $\delta(H)$, that is
$$
\delta(H) = \min_{i\in Y} d_{H[Y]}(i).
$$
Now we double count the set of pairs $\{(i,e)\in Y\!\times\! E : i\in e \subseteq Y\}$, that is:
\begin{align*}
 \delta(H) \cdot |Y| &\leqslant \sum_{i\in Y} |\{(i,e) \in Y \!\times\! E : i\in e\subseteq Y\}|\\
 &=\sum_{e\in E(Y)} |\{(i,e)\in Y \!\times\! E : i\in e\subseteq Y\}|\\
 &\leqslant \max_{e\in E} |e| \cdot |E(Y)| 
\leqslant\max_{e\in E} |e| \cdot {\rm ed}(H) \cdot |Y|.
 \end{align*}
\end{proof}

\begin{theorem}\label{thm:max-e}
For every hypergraph polynomial $p$ of the hypergraph $H=([n],E)$ we have
\[
AT(p)-1 \leqslant \delta(H) \leqslant \max_{e\in E} |e| \cdot {\rm ed}(H) \leqslant \max_{e\in E} |e| \cdot (AT(p)-1).
\]
\end{theorem}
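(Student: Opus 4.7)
The statement is three inequalities chained together, and my plan is to treat them separately. The middle one, $\delta(H)\leqslant \max_{e\in E}|e|\cdot\mathrm{ed}(H)$, is exactly Lemma \ref{lem:deg-den}. The rightmost is an immediate consequence of Lemma \ref{rem:edlowerbound}: that result gives $\mathrm{ed}(H)\leqslant \lceil\mathrm{ed}(H)\rceil\leqslant AT(p)-1$, and multiplying by the nonnegative factor $\max_{e\in E}|e|$ yields $\max_{e\in E}|e|\cdot\mathrm{ed}(H)\leqslant \max_{e\in E}|e|\cdot(AT(p)-1)$. So the substantive content lies in the leftmost inequality $AT(p)-1\leqslant\delta(H)$.

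To prove $AT(p)\leqslant\delta(H)+1$ I would induct on $|E|$. The base case $|E|=0$ is immediate: $p$ is a nonzero constant, $AT(p)=1$, and $\delta(H)=0$. For the inductive step, fix a permutation $\sigma\in S_n$ realizing the minimum in the second characterization of degeneracy (equation (\ref{eq:deg-second})) and let $v=\sigma(n)$. Evaluating that definition at $i=n$ shows that the set $E^*=\{e\in E: v\in e\}$ satisfies $|E^*|\leqslant\delta(H)$. Let $H'=H[V\setminus\{v\}]$, whose edge set is $E\setminus E^*$; the restriction of $\sigma$ to $[n-1]$ witnesses $\delta(H')\leqslant\delta(H)$ (since $E(H')\subseteq E$). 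The polynomial factors as $p=p'\cdot Q$, where $p'=\prod_{e\in E\setminus E^*}\bigl(\sum_{u\in e}a_{e,u}x_u\bigr)$ is a hypergraph polynomial of $H'$ (none of these linear forms involves $x_v$) and $Q=\prod_{e\in E^*}\bigl(\sum_{u\in e}a_{e,u}x_u\bigr)$ is a product of $|E^*|\leqslant\delta(H)$ linear forms, each containing $x_v$ with nonzero coefficient.

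By the inductive hypothesis there is a top-degree monomial $M=\prod_i x_i^{\beta_i}$ of $p'$ with nonzero coefficient and $\max_i\beta_i\leqslant\delta(H')\leqslant\delta(H)$. Choosing $x_v$ from every factor of $Q$ produces the monomial $x_v^{|E^*|}$ with nonzero coefficient $\prod_{e\in E^*}a_{e,v}$. The crux of the argument---and what I expect to be the only real subtlety---is that $M\cdot x_v^{|E^*|}$ has a \emph{unique} factorization through $p=p'\cdot Q$: since $p'$ does not involve $x_v$ at all and $\deg Q=|E^*|$, the $Q$-side must contribute $x_v^{|E^*|}$ in full, forcing the $p'$-side to contribute $M$. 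Hence the coefficient of $M\cdot x_v^{|E^*|}$ in $p$ is a product of two nonzero quantities and is itself nonzero, while its maximum exponent is $\max(\max_i\beta_i,|E^*|)\leqslant\delta(H)$, completing the induction.
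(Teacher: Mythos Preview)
Your proof is correct and follows essentially the same route as the paper's: both fix a degeneracy ordering, peel off the last vertex $v$, and exploit the fact that $x_v$ must be selected from every bracket containing it to get a unique factorization; unrolling your recursion produces exactly the monomial $\prod_{e\in E}x_{\max(e)}$ that the paper exhibits explicitly. One small technical slip: you say you induct on $|E|$, but your step removes the vertex $v=\sigma(n)$, and if $v$ happens to be isolated then $E^*=\emptyset$ and $|E|$ does not decrease. Inducting on $n$ (as the paper does) or simply discarding isolated vertices fixes this immediately.
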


\begin{proof} Let $p$ be the polynomial
$$
p(x_1, \ldots, x_n)=\prod_{e\in E} \left(\sum_{i\in e} a_{e,i}x_i\right),
$$
where $a_{e,i}\in \F\setminus \{0\}$ for each pair $(e,i)$.

The second and third inequalities follow from Lemmas \ref{lem:deg-den} and  \ref{rem:edlowerbound}, respectively.
The only thing left to prove is that
\begin{equation}\label{eq:ATq-leq-deg}
AT(p) \leqslant \delta(H)+1.
\end{equation}
To prove inequality (\ref{eq:ATq-leq-deg}) we use the equivalent definition of hypergraph degeneracy from (\ref{eq:deg-second}).
To simplify the notation, without loss of generality, we can assume that the permutation $\sigma\in S_n$ giving the minimum in (\ref{eq:deg-second}) is the identity, that is
\begin{equation*}
\delta(H) = \max _{i\in[n]} |\{e \in E: i \in e\subseteq \{1, \ldots, i\}\}|.
\end{equation*}
Equivalently, 
\begin{equation}\label{eq:deg-second-max}
\delta(H) = \max _{i\in[n]} |\{e \in E: \max (e) = i \} |.
\end{equation}
As a witness of inequality (\ref{eq:ATq-leq-deg}), we use the monomial 
$$
c \prod_{e\in E} x_{\max(e)},
$$
which is one of the monomials resulting from the choice of variables in the product of the brackets constituting the definition of the polynomial $p(x_1, \ldots, x_n)=\prod_{e\in E} \left( \sum_{i\,\in\,e} a_{e,i}\,x_{i} \right)$, and show that $c\neq 0$.
First, we note that Equation (\ref{eq:deg-second-max}) implies that the highest power of a single variable in the monomial $c \prod_{e\in E} x_{\max(e)}$ does not exceed $\delta(H)$.
Next, we claim that the sequence of choices
$$
(a_{e,\max(e)}: e\in E)
$$
from the appropriate brackets $(\left(\sum_{i\in e}a_{e,i}x_i): e\in E\right)$
is the only possible one that generates the monomial $c \prod_{e\in E} x_{\max(e)}$.
As a consequence, we will get that $c=\prod_{e\in E} a_{e,\max(e)} \neq 0$, which completes the proof.

We will prove the uniqueness of the choice of factors by induction on $n$.
For $n=0$, this is trivially fulfilled.
Now assume that $n>0$ and that for $n-1$ the induction thesis is satisfied.

Since the last variable $x_{n}$ must be selected from all brackets in which it appears, then the variables $x_{1},\ldots,x_{n-1}$ are selected from those brackets that do not contain $x_{n}$. Therefore, the coefficient $c$ of $\prod_{e\in E}x_{e,\max (e)}$ equals
$$
c=c'\prod_{e\in E: n\in e} a_{e,n},
$$
where $c'$ is the coefficient of $\prod_{e\in E([n-1])}x_{e,\max (e)}$ in the polynomial $\prod_{e\in E([n-1])}\left(\sum_{i\in e} a_{e,i}x_i\right)$. The claim follows by induction. This completes the proof.
\end{proof}

\section{Main result}\label{sec:main}

It must be admitted that the above theorems and observations are more or less a generalization of our knowledge about graphs.
The real goal of the research is to try to replace the coefficient $\max_{e\in E}|e|$ in Theorem~\ref{thm:max-e} with a constant independent of the hyperedge size.
We consider the main contribution of our work to be Theorem~\ref{thm:main}, which in a sense accomplishes this goal, although it unfortunately uses some manipulation in the shape of a polynomial.

Recall that the main result of this work is as follows.
\thmMain*

%
For the clarity of presentation, we divide the proof of Theorem \ref{thm:main} into two lemmas.

\begin{lemma}\label{lem:H-G}
Let $H=(V,E)$ be a hypergraph with density ${\rm ed}(H)\leqslant k$ for some integer $k$ and the set of edges $E=\{e_1,\ldots,e_m\}$ such that $|e_i|\geqslant 2$ for each $i=1,2,\ldots,m$.

There exists a multigraph $G=(V,F)$ of density ${\rm ed}(G)\leqslant k$ with the set of edges $F=\{f_1,\ldots,f_m\}$ satisfying $f_i \subseteq e_i$ for each $i=1,2,\ldots,m$.
\end{lemma}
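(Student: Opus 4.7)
The plan is to produce $G$ by selecting, for each hyperedge $e_i$, a \emph{head} vertex $v_i \in e_i$ subject to the constraint that no vertex of $V$ is the head of more than $k$ hyperedges. Once the heads are chosen, I set $f_i = \{u_i, v_i\}$ where $u_i$ is any element of $e_i \setminus \{v_i\}$, which is nonempty because $|e_i| \geq 2$. The density bound for the resulting multigraph $G$ then follows immediately: for every nonempty $X \subseteq V$, each edge $f_i$ with $f_i \subseteq X$ has $v_i \in X$, so
\[
|F(X)| \;\leq\; \sum_{v \in X} |\{i \in [m] : v_i = v\}| \;\leq\; k\,|X|.
\]

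To produce the head assignment I would re-run the cloning argument already used in the proof of Lemma~\ref{obs:fullybal-den}. Consider the auxiliary family of blocks $S_i = e_i \times [k] \subseteq V \times [k]$, one for each $i \in [m]$, and look for a system of distinct representatives $r : [m] \to V \times [k]$ with $r(i) \in S_i$. For any subfamily indexed by $I \subseteq [m]$, let $X = \bigcup_{i \in I} e_i$; then each $e_i$ with $i\in I$ is contained in $X$, and the density hypothesis gives
\[
|I| \;\leq\; |E(X)| \;\leq\; k\,|X| \;=\; \Bigl|\bigcup_{i \in I} S_i\Bigr|.
\]
Hall's condition therefore holds and the injection $r$ exists. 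Projecting $r$ onto the first coordinate of $V \times [k]$ yields the required heads $v_i \in e_i$, and the injectivity of $r$ forces each vertex of $V$ to appear as a head for at most $k$ indices.

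The main obstacle is really just framing the head-selection problem so that the density assumption feeds Hall's condition directly; the cloning trick $V \mapsto V \times [k]$ does this cleanly, and after that the tail vertex $u_i$ can be chosen arbitrarily without affecting anything, since it is the head, not the tail, that certifies $v_i \in X$ whenever $f_i \subseteq X$. The multigraph nature of $G$ is unproblematic: repeated indices $i \neq j$ with $f_i = f_j$ are counted with multiplicity in $|F(X)|$, exactly as $|E(X)|$ counts multiedges in $H$.
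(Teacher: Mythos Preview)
Your proof is correct and follows essentially the same approach as the paper: both use the cloning $V\mapsto V\times[k]$ together with Hall's theorem to select a representative (your ``head'' $v_i$, the paper's $r(e_i)$) in each hyperedge so that no vertex is chosen more than $k$ times, then complete each $f_i$ by an arbitrary second vertex and bound $|F(X)|$ via the heads lying in $X$. The arguments are the same up to notation.
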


\begin{proof}
It is enough to consider the case $k=\lceil {\rm ed}(H) \rceil$. As in the proof of Lemma ~\ref{obs:fullybal-den}, define the hypergraph $H'=(V\times [k], E'=\{e\times [k]: e\in E\})$ in which, as shown in that proof, there is a system of representatives $r':E'\rightarrow V\times [k]$ with $r'(e')\in e'$ for each $e'\in E'$ and $\{r'(e'):e'\in E'\}$ pairwise distinct. 

We define the multigraph $G=(V,F)$ as follows. Let $r:E\to V$ be defined as $r(e)=v$ if $r'(e')=(v,s)\in V\times [k]$ for some $s\in [k]$. For every edge $e_i\in E$ define the edge $f_i\in F$ formed by the pair $\{r(e_i),j\}$ for some arbitrarily chosen $j\in e_i\setminus r(e_i)$, hence indeed $f_i\subseteq e_i$; we will abuse the notation above, and from now on we regard $r$ as a function $r: F\to V$ where $r(f_i)$ corresponds to $r(e_i)$. Note that $|F|=|E|$. 

What remains to prove is that ${\rm ed}(G)\leqslant k$.

Let us consider any nonempty subset $X\subseteq V$.
Because $r(f)\in f$ for each $f\in F$ we have $r(F(X))\subseteq X$ and in consequence $F(X)\subseteq r^{-1}(X)$. Hence,
$$
|F(X)|\leqslant |r^{-1}(X)|= \left|\bigcup_{i\in X} r^{-1}(i)\right|\leqslant \sum_{i\in X}|r^{-1}(i)| \leqslant k\cdot |X|.
$$
The last inequality follows as $r'$ is an injection and the projection from $V'$ to $V$ is $k$ to $1$. Thus ${\rm ed}(G)=\max_{\emptyset\neq X\subseteq V}\frac{|F(X)|}{|X|}\leqslant k$,
which completes the proof.
\end{proof}

Thus, Lemma \ref{lem:H-G} allows one to associate to a hypergraph $H$ a multigraph $G$ with the properties stated in the Lemma. We observe that a coloring $\chi:V \to \N$ of a hypergraph $H$ is proper if $\chi$ is proper in the associated multigraph $G$, although the reverse implication is not true.
\begin{example}
Consider a hypergraph with $V=[3]$, $E=\{e=V\}$ and $r(e)=1$. Let the edge set of $G$ be $F=\{f=\{1,2\}\}$. So, we have a hypergraph with one edge containing all its vertices with representative $1$ and a graph with one edge where the other endpoint of the edge is vertex $2$. The density condition is obviously satisfied and there is a coloring $\chi(1)=\chi(2)=1, \chi(3)=2$ which is proper for $H$ but not for $G$.
\end{example}

In particular, we obtain the following Corollary bounding the list chromatic number and the paintability number of $H$.

\begin{corollary} \label{cor:deg} The list chromatic number and the paintability of a hypergraph $H$ satisfies
$$
\chi_L(H)\leq \chi_P(H)\leq 2\,\lceil {\rm ed}(H) \rceil+1.
$$ 
\end{corollary}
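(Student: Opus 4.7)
My plan is to reduce the bound on $H$ to a purely graph-theoretic statement via Lemma \ref{lem:H-G}. The first inequality $\chi_L(H)\le\chi_P(H)$ is the standard hypergraph analogue already recorded in the introduction, so I will concentrate on the paintability bound. Setting $k=\lceil {\rm ed}(H)\rceil$, Lemma \ref{lem:H-G} furnishes a multigraph $G=(V,F)$ with ${\rm ed}(G)\le k$ such that every hyperedge $e_i\in E$ contains some graph edge $f_i\in F$.

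The next step is a transfer principle. Any vertex coloring $\chi$ that is proper for $G$ (the two endpoints of each $f_i$ receive different colors) is automatically proper for $H$, because every $e_i\supseteq f_i$ is then non-monochromatic. The same observation lifts to the online setting: a Painter strategy that produces a proper coloring of $G$ from lists revealed one color at a time produces a proper coloring of $H$ when applied to the same reveal sequence. Consequently $\chi_P(H)\le\chi_P(G)$, and it suffices to bound $\chi_P(G)$ by $2k+1$.

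Finally, I would exploit that edge density controls degeneracy on the graph side: in every induced submultigraph $G[X]$ the number of edges is at most $k|X|$, so the sum of degrees is at most $2k|X|$ and some vertex has degree at most $2k$. Iterating this removal shows that (the underlying simple graph of) $G$ is $2k$-degenerate, which by the classical greedy strategy in a degeneracy order gives $\chi_P(G)\le 2k+1$. Chaining the inequalities yields $\chi_L(H)\le\chi_P(H)\le\chi_P(G)\le 2k+1=2\lceil {\rm ed}(H)\rceil+1$.

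There is no genuine obstacle here; the only care point is to notice that parallel edges in $G$ are irrelevant to proper coloring (so passing to the underlying simple graph is harmless for $\chi_P$) while they only help in the degeneracy count, so the $2k$-degeneracy bound for $G$ is unaffected by multiplicities.
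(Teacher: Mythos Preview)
Your proof is correct and follows essentially the same route as the paper: reduce to the associated multigraph $G$ from Lemma~\ref{lem:H-G}, use $\chi_P(H)\le\chi_P(G)$ via the inclusion $f_i\subseteq e_i$, and then bound $\chi_P(G)$ by $\delta(G)+1\le 2\,{\rm ed}(G)+1\le 2\lceil{\rm ed}(H)\rceil+1$. The only difference is that the paper cites Lemma~\ref{lem:deg-den} and \cite{ZhuBal} for the last two steps, whereas you spell out the double-counting and greedy arguments directly.
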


\begin{proof} Simply observe that the list chromatic number and the paintability of the associated multigraph $G$ is upper bounded by its degeneracy plus one, see e.g. \cite[p. 63]{ZhuBal}, and from Lemma \ref{lem:deg-den} we have $\delta (G)\le 2\,{\rm ed}(G)$.
\end{proof}

The following Lemma is the second ingredient used in the proof of Theorem \ref{thm:main}.

\begin{lemma}\label{lem:main}
Let $H=([n],E)$ be a hypergraph with the set of edges $E=\{e_1,e_2,\ldots,e_m\}$ such that $|e_i|\geqslant 2$ for each $i\in[m]$.

Assume that there is a system of representatives $r:E \rightarrow V$ such that 
$$
r(e_i) < \max(e_i) \;\text{ for all } i\in[m].
$$
Then, for a fully unbalanced hypergraph polynomial $p$ of $H$, there exist permutations of the edges $\sigma_{e_1}\!\in\!S_{e_1},$ $\sigma_{e_2}\!\in\!S_{e_2},$ $\ldots,$ $\sigma_{e_m}\!\in\!S_{e_m}$ such that the coefficient of the monomial $x_{r(e_1)}\cdot x_{r(e_2)} \cdot \ldots \cdot x_{r(e_{m})}$ in the polynomial $p_{\sigma_{e_1} \sigma_{e_2} \ldots \sigma_{e_m}}$ is nonzero.
\end{lemma}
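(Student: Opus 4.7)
My plan is to induct on $m = |E|$; the base case $m = 0$ is immediate.

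For the inductive step, let $M_{\max} = \max_i \max(e_i)$ and pick any edge $e^*$ with $\max(e^*) = M_{\max}$. Write $v^* = r(e^*)$ and $R = \{r(e_i) : i \in [m]\}$. The hypothesis $r(e_i) < \max(e_i)$ forces $M_{\max} \notin R$ (else $M_{\max} = r(e_j) < \max(e_j) \leq M_{\max}$), so $1 \leq |e^* \cap R| \leq |e^*| - 1$; this strict inequality is the structural consequence of the ordering assumption that drives the whole argument. With $b_{e_i, v} = a_{e_i, \sigma_{e_i}(v)}$ the desired coefficient of $\prod_i x_{r(e_i)}$ in $p_\sigma$ equals $C(\sigma) = \sum_{f \in B} \prod_i b_{e_i, f(e_i)}$, where $B$ is the set of bijections $f: E \to R$ with $f(e_i) \in e_i$ for every $i$. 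Splitting $B$ by $u = f(e^*) \in e^* \cap R$ gives
\[
C(\sigma) = \sum_{u \in e^* \cap R} b_{e^*, u} \cdot C_u(\sigma'),
\]
where $\sigma' = (\sigma_{e_i})_{i \neq *}$ and $C_u(\sigma')$ is the coefficient of $\prod_{v \in R \setminus \{u\}} x_v$ in $\prod_{i \neq *} L_{e_i}^{\sigma'}$. For $u = v^*$ this is exactly the target coefficient for the sub-hypergraph $H \setminus \{e^*\}$ with representatives $r|_{E \setminus \{e^*\}}$, which inherits every hypothesis of the lemma; the inductive hypothesis then yields some $\sigma'$ with $C_{v^*}(\sigma') \neq 0$, and I fix such a $\sigma'$.

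It remains to choose $\sigma_{e^*}$ making $C(\sigma) \neq 0$, which I do by case analysis. If all $C_u(\sigma')$ for $u \in e^* \cap R$ coincide, then $C(\sigma) = C_{v^*}(\sigma') \cdot \sum_{u \in e^* \cap R} b_{e^*, u}$, so I must realize a nonzero $k$-subset sum of the multiset $\{a_{e^*, v}\}_v$ with $k = |e^* \cap R|$. The strict bound $k \leq |e^*| - 1$ lets me compare two $k$-subsets $T_0 \cup \{x\}$ and $T_0 \cup \{y\}$, with $x \neq y$ chosen so that $a_{e^*, x} \neq a_{e^*, y}$ (available by fully unbalancedness); since the two sums differ by $a_{e^*, x} - a_{e^*, y} \neq 0$, they cannot both vanish, so some $k$-subset sum is nonzero and a suitable $\sigma_{e^*}$ realizes it. If instead $C_{u_1}(\sigma') \neq C_{u_2}(\sigma')$ for some $u_1, u_2 \in e^* \cap R$, I use fully unbalancedness to pick $\sigma_{e^*}$ placing distinct values $\alpha \neq \beta$ at positions $u_1, u_2$, and compare with $\sigma''_{e^*} = \sigma_{e^*} \circ (u_1\, u_2)$: the identity $C(\sigma) - C(\sigma'') = (\alpha - \beta)(C_{u_1}(\sigma') - C_{u_2}(\sigma')) \neq 0$ forces at least one of the two to be nonzero.

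The most delicate step is the ``all $C_u$ equal'' sub-case: this is precisely where the strict inequality $r(e_i) < \max(e_i)$ is needed, since it ensures $|e^* \cap R|$ is strictly smaller than $|e^*|$ and thereby allows the subset-sum swapping to go through. Fully unbalancedness is equally essential there, because without it the multiset could be constant and the sum rigidly determined; the combination of the two hypotheses is exactly what makes the induction close.
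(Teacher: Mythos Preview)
Your argument is correct in substance and follows the same inductive skeleton as the paper: remove an edge $e^*$ with maximal $\max(e^*)$, fix the permutations on the remaining edges by induction so that the ``residual'' coefficient $C_{v^*}(\sigma')$ is nonzero, and then adjust $\sigma_{e^*}$. One genuine slip is the description of the coefficient as a sum over \emph{bijections} $f:E\to R$: the map $r$ is not assumed injective (and in the paper's application it is not), so in general $|R|<m$ and no such bijection exists. The correct index set $B$ consists of all functions $f$ with $f(e_i)\in e_i$ whose value \emph{multiset} equals the multiset $\{r(e_i)\}_i$, and $C_u(\sigma')$ should be the coefficient of $x_{r(e_1)}\cdots x_{r(e_m)}/x_u$ rather than of $\prod_{v\in R\setminus\{u\}}x_v$. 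With this fix your decomposition $C(\sigma)=\sum_{u\in e^*\cap R} b_{e^*,u}\,C_u(\sigma')$ and both cases go through verbatim.

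The only real difference from the paper is in how $\sigma_{e^*}$ is chosen. The paper extends the $C_u$ to all of $e^*$ by setting $C_u=0$ for $u\notin R$, notes that $C_{r(e^*)}\neq 0$ while $C_{M_{\max}}=0$, and then invokes a short combinatorial claim (on a four-element multiset $\{j,j',r(e^*),M_{\max}\}$) to produce a single pair $k,l\in e^*$ with $a_{e^*,k}\neq a_{e^*,l}$ and $C_k\neq C_l$; a single transposition of the identity then suffices. You instead split into ``all $C_u$ equal'' versus ``some two differ'' and, in the first case, use that $1\le |e^*\cap R|\le |e^*|-1$ to realise a nonzero $k$-subset sum. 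Both routes exploit the same two facts ($C_{v^*}\neq 0$ and $M_{\max}\notin R$); the paper's packaging is slightly more uniform and always outputs either the identity or a transposition for $\sigma_{e^*}$, whereas your Case~1 may produce a more general permutation, but neither approach gains anything essential over the other.
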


\begin{proof}
Without loss of generality, we can assume that the hyperedges in $E$ are numbered so that
\begin{equation}\label{eq-e1-e2-em}
\max (e_1) \leqslant \max (e_2) \leqslant \ldots \leqslant \max (e_m).
\end{equation}
The proof is by induction on $n+m$. The Lemma trivially holds for $m=0$ and every $n\ge 0$ (in such a case $p\equiv 1$). Assume $m>0$, so that $n>0$. We may assume that $\max(e_m)=n$ as otherwise we may apply induction on the hypergraph $(V \setminus \{n\}, E)$. 

Let us assume that the polynomial $p$ is of the form
$$
p(x_1, \ldots, x_n)=\prod_{i\in [m]} \big( \sum_{j\,\in\,e_i} a_{e_i,j}\cdot x_{j} \big).
$$
We refer to the inductive assumption for 
\begin{enumerate}
\item the hypergraph $H':=(V, E \,\setminus\, \{e_m\})$,
\item the system of representatives
$r':{E \,\setminus\, \{e_m\}}\rightarrow V$ such that $r'(e_i):=r(e_i)$ for all $i = 1,\ldots, m-1$, and
\item the hypergraph polynomial $$p'(x_1, \ldots, x_n):=\prod_{i\in [m-1]} \big( \sum_{j\,\in\,e_i} a_{e_i,j}\cdot x_{j} \big).$$
\end{enumerate}
By the induction hypothesis, there exist permutations of the edges $\sigma_{e_1}\!\in\!S_{e_1},$ $\sigma_{e_2}\!\in\!S_{e_2},$ $\ldots,$ $\sigma_{e_{m-1}}\!\in\!S_{e_{m-1}}$ such that the coefficient of the monomial $x_{r(e_1)}\cdot x_{r(e_2)} \cdot \ldots \cdot x_{r(e_{m-1})}$ in the polynomial $ p'_{\sigma_{e_1} \sigma_{e_2} \ldots \sigma_{e_{m-1}}}$ is nonzero.

Now consider the following polynomial for the hypergraph $H$ with the choice of $\sigma_{e_m}$ the identity:
\begin{align*}
p_{\sigma_{e_1}\ldots\sigma_{e_{m-1}}\sigma_{e_m}}(x_1, \ldots, x_n)
&=\left( \sum_{j\,\in\,e_m} a_{e_m,j}\cdot x_{j} \right)\cdot p'_{\sigma_{e_1}\ldots\sigma_{e_{m-1}}}(x_1, \ldots, x_n)\\
&=\sum_{j\,\in\,e_m} a_{e_m,j}\cdot x_{j}\cdot p'_{\sigma_{e_1}\ldots\sigma_{e_{m-1}}}(x_1, \ldots, x_n).
\end{align*}
Let $b_j$ be the coefficient of $x_{r(e_1)}\cdots x_{r(e_m)}/x_j$ in $p'_{\sigma_{e_1}\ldots\sigma_{e_{m-1}}}$ if $j\in e_m\cap r(E)$ and $b_j=0$ otherwise. The above decomposition of the polynomial $p_{\sigma_{e_1}\ldots\sigma_{e_{m-1}}\sigma_{e_m}}$ shows that the coefficient $c$ of the monomial $x_{r(e_1)} \cdots x_{r(e_m)}$ in $p_{\sigma_{e_1}\ldots\sigma_{e_{m-1}}\sigma_{e_m}}$ is also decomposed analogously:
$$
c = \sum_{j\in e_m} a_{e_m,j} \cdot b_j.
$$
If $c \neq 0$, taking $\sigma_{e_m} = \textrm{id}\in S_{e_m}$ completes the proof.
So we can assume that $c = 0$.
If there are two vertices $k,l \in e_m$ such that
\begin{equation}\label{eq:uw}
a_{e_m,k}\neq a_{e_m,l}\quad \text{and}\quad b_k \neq b_{l},
\end{equation}
then, by choosing $\sigma_{e_m}=(kl)\in S_{e_m}$ as the transposition of $k$ and $l$ instead of the identity, the coefficient of $x_{r(e_1)}\cdots x_{r(e_m)}$ in $p_{\sigma_{e_1}\ldots \sigma_{e_m}}$ is 
$$
c' = \sum_{j\in e_m} a_{e_m,\sigma_{e_m}(j)} \cdot b_j,
$$
which is different from zero, because
\begin{align*}
c' = c'-c& = \sum_{j \in e_m} a_{e_m,\sigma_{e_m}(j)} \cdot b_j - \sum_{j \in e_m} a_{e_m,j} \cdot b_j\\
&=a_{e_m,k} \cdot b_l + a_{e_m,l} \cdot b_k - a_{e_m,k} \cdot b_k - a_{e_m,l} \cdot b_l\\
&=(a_{e_m,k}-a_{e_m,l})(b_l- b_k)\neq 0.
\end{align*}

We shall find the desired $k,l$ satisfying \eqref{eq:uw} in the set $\{j,j',r(e_m), n\}\subseteq e_m$ where $j,j'$ are witnesses that $p$ is fully unbalanced for the bracket resulting from the edge $e_m$, that is, $a_{e_m,j}\neq a_{e_m,j'}.$ We use the following simple claim.

\begin{claim}\label{claim1}
Let $\{a,b,c,d\}$ be a multiset and $f,g:\{a,b,c,d\}\to \F$. Assume that $f(a)\neq f(b)$ and $g(c)\neq g(d)$. Then there is a pair $x,y\in \{a,b,c,d\}$ such that $f(x)\neq f(y)$ and $g(x)\neq g(y)$.
\end{claim}

\begin{proof}[Proof of Claim \ref{claim1}.] By contradiction. Suppose that $f(x)\neq f(y)$ implies $g(x)=g(y)$ and $g(x)\neq g(y)$ implies $f(x)=f(y)$. By the assumptions we have $a\neq b$ and $c\neq d$. Color the edge $xy$ of the complete graph $K_4$ with the vertices labeled with the members of $\{a,b,c,d\}$ red if $f(x)\neq f(y)$ and $g(x)=g(y)$ and blue if $g(x)\neq g(y)$ and $f(x)=f(y)$. By the assumptions, $ab$ is a red edge and $cd$ is a blue edge. At least one of the edges $ad$ or $bd$ must be red by the definition of the coloring. Assume without loss of generality that $ad$ is red. Then $g(a)=g(b)=g(d)$ and the edges $ca$ and $cb$ are blue. But then $f(a)=f(c)=f(b)$, contradicting the hypothesis.
\end{proof}

We apply the above Claim to $\{a,b,c,d\}=\{j,j',r(e_m), n\}$ and the functions $f(x)=a_{e_m,x}$ and $g(x)=b_x$. We observe that $g(r(e_m))=b_{r(e_m)}$ is the coefficient of $x_{r(e_1)}\cdot \ldots \cdot x_{r(e_{m-1})}$ in the polynomial $p'_{\sigma_{e_1}\ldots\sigma_{e_{m-1}}}$ which, by the inductive hypothesis, is nonzero. On the other hand, since $r(e_1), \ldots, r(e_m)<n$ by the assumption of the Lemma, the variable $x_n$ does not appear in $x_{r(e_1)}\cdots x_{r(e_m)}$ and therefore $b_n=0$. Thus we have $g(r(e_m))\neq g(n)$, while $f(j)\neq f(j')$ by the choice of $j$ and $j'$. By the claim, there are $k,l\in \{j,j',r(e_m),n\}$ satisfying \eqref{eq:uw}. By the argument preceding the Claim, this completes the proof.
\end{proof}

We can now proceed to the proof of the main theorem.

\begin{proof}[Proof of Theorem \ref{thm:main}]
Consider a hypergraph $H=([n],E)$ with the set of edges $E=\{e_1,\ldots,e_m\}$, where $|e_i|\ge 2$ for each $i=1, \ldots, m$.
It follows from Lemma \ref{lem:H-G} that there exists a multigraph $G=(V,F)$ of density ${\rm ed}(G)\leqslant \lceil {\rm ed}(H) \rceil$ with the set of edges $F=\{f_1,\ldots,f_m\}$ such that $f_i \subseteq e_i$ for each $i=1,2,\ldots,m$.
In turn, from Lemma \ref{lem:deg-den} we get the following bound on the degeneracy of the multigraph $G$:
\begin{equation*}
\delta(G) \leqslant 2 \cdot {\rm ed}(G) \leqslant 2 \cdot \lceil {\rm ed}(H) \rceil.
\end{equation*}
Degeneracy defined in (\ref{eq:deg-second}), where the ``left-going'' edges are counted, can be represented equivalently by counting the ``right-going'' edges in the following way:
$$
\delta(G) = \min_{\sigma\in S_n} \max _{i\in[n]} |\{j \in [m]: {\sigma(i)} \in f_j \subseteq \sigma([n]\setminus [i-1])\}|.
$$

We can assume without loss of generality that the permutation that realizes the minimum is the identity.
Then,
\begin{align}\label{eq-ed-minf}
2 \cdot \lceil {\rm ed}(H) \rceil \geqslant \delta(G) 
&=\max _{i\in[n]} |\{j \in [m]: i\in f_j\subseteq [n]\setminus [i-1] \}|\nonumber\\
&= \max _{i\in[n]} |\{j \in [m]: \min (f_j) = i \}|.
\end{align}

Let us define the representative system $r:E\rightarrow V$ for a hypergraph $H$ in such a way that $r(e_j) := \min(f_j)$ for each $j=1,\ldots,m$.
This is well defined because $f_j \subseteq e_j$ for each $j=1,\ldots,m$.
As a consequence, we get
$$
r(e_j) = \min(f_j) < \max(f_j) \leqslant \max(e_j).
$$
This allows us to use Lemma \ref{lem:main} for the hypergraph $H$ and the system of representatives $r$ and the polynomial $p$ of $H$.
Then, we get that there exist permutations of the edges $\sigma_{e_1}\!\in\!S_{e_1},$ $\sigma_{e_2}\!\in\!S_{e_2},$ $\ldots,$ $\sigma_{e_m}\!\in\!S_{e_m}$ such that the coefficient of the monomial $x_{r(e_1)}\cdot x_{r(e_2)} \cdot \ldots \cdot x_{r(e_{m})}$ in the polynomial $p_{\sigma_{e_1} \sigma_{e_2} \ldots \sigma_{e_m}}$ is nonzero.
Finally, let us note that by inequality (\ref{eq-ed-minf}) we obtain that the highest power in the monomial $x_{r(e_1)}\cdot x_{r(e_2)} \cdot \ldots \cdot x_{r(e_{m})}$ is bounded from above by
\begin{align*}
\max _{i\in[n]} |r^{-1}(i)|& = \max _{i\in[n]} |\{j \in [m]: r(e_j) = i \}| \\
&= \max _{i\in[n]} |\{j \in [m]: \min (f_j) = i \}| \leqslant 2 \cdot \lceil {\rm ed}(H) \rceil,
\end{align*}
which completes the proof.
\end{proof}

Another argument can also be presented instead of Lemma \ref{lem:main}. It gives a proof of Theorem \ref{thm:main} that is somewhat shorter but less constructive.
 
Let $H=(V,E)$ be a hypergraph, where $E=\{e_1, e_2,\ldots,e_m\}$ and $|e_i|\geqslant 2$ for each $i=1,2,\ldots,m$. 
For a hypergraph polynomial
$$
p(x_1, \ldots, x_n)=\prod_{i\in[m]} \big( \sum_{v\,\in\,e_i} a_{e_i,v}\,x_{v} \big) \in \F[x_1, \ldots, x_n].
$$
consider the following two families of polynomials in $\F[x_1, \ldots, x_n]$:
\begin{itemize}
\item
$\mathcal{P}(p)$ will be the family of all permuted polynomials 
$$
 p_{\sigma_{e_1}\sigma_{e_2}\ldots\sigma_{e_m}}(x_1, \ldots, x_n)=\prod_{i\in[m]} \big( \sum_{v\,\in\,e_i} a_{e_i,\sigma_{e_i}(v)}\,x_{v} \big) \in \F[x_1, \ldots, x_n],
$$
ranging over all possible choices of permutations $\sigma_{e_1}\!\in S_{e_1},$ $\sigma_{e_2}\!\in S_{e_2},$ $\ldots,$ $\sigma_{e_m}\!\in S_{e_m}$;
\item 
$\mathcal{Q}$ will be the family of all multigraph polynomials
$$
 q_{f_1f_2\ldots f_m}(x_1, \ldots, x_n)=\prod_{i\in[m]} \big( x_{v_i} - x_{w_i} \big) \in \F[x_1, \ldots, x_n],
$$
ranging over all possible choices of two-element subsets $f_i = \{v_i, w_i\} \subseteq e_i$.
\end{itemize}
Note that the family $\mathcal{P}(p)$ depends on the polynomial $p$, while $\mathcal{Q}$ only depends on $H$.

\begin{lemma}\label{lem:comp}
For every hypergraph $H=(V,E)$ and fully unbalanced hypergraph polynomial $p$ of $H$ 
 $$
 \min \{ AT(p') : p' \in \mathcal{P}(p) \} \leq 
 \min \{ AT(q) : q \in \mathcal{Q} \}.
 $$
\end{lemma}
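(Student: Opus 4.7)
The plan is to prove the stronger statement that every multigraph polynomial $q\in\mathcal{Q}$ lies in the linear span of the permuted polynomials in $\mathcal{P}(p)$. Once this is established, for any top-degree monomial $\mathbf{x}^\alpha=x_1^{\alpha_1}\cdots x_n^{\alpha_n}$ whose coefficient in $q$ is nonzero, the same monomial must appear with nonzero coefficient in at least one $p'\in\mathcal{P}(p)$, which immediately yields $AT(p')\leq AT(q)$ and hence the desired minimum-over-minimum inequality.

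To exhibit $q=\prod_{i\in[m]}(x_{v_i}-x_{w_i})\in\mathcal{Q}$ as such a combination, I would use an inclusion--exclusion identity over subsets of edges. For each edge $e_i$, the fully unbalanced hypothesis provides a pair $u_i,u'_i\in e_i$ with $a_{e_i,u_i}\neq a_{e_i,u'_i}$; I then pick a permutation $\sigma_i^{0}\in S_{e_i}$ with $\sigma_i^{0}(v_i)=u_i$ and $\sigma_i^{0}(w_i)=u'_i$, and set $\sigma_i^{1}=\sigma_i^{0}\circ (v_i\,w_i)$. Writing $b_i(\tau)=\sum_{v\in e_i}a_{e_i,\tau(v)}x_v$ for a single bracket, a direct computation shows that $b_i(\sigma_i^{0})$ and $b_i(\sigma_i^{1})$ agree outside $\{v_i,w_i\}$, and their difference equals $(a_{e_i,u_i}-a_{e_i,u'_i})(x_{v_i}-x_{w_i})$.

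Expanding the product $\prod_i\bigl(b_i(\sigma_i^{0})-b_i(\sigma_i^{1})\bigr)$ by multilinearity then gives, on one hand, a signed sum $\sum_{S\subseteq [m]}(-1)^{|S|}p_{\sigma^{S}}$ of elements of $\mathcal{P}(p)$ (where $\sigma^{S}_{e_i}=\sigma_i^{1}$ for $i\in S$ and $\sigma_i^{0}$ otherwise), and on the other hand the nonzero scalar multiple $\bigl(\prod_i(a_{e_i,u_i}-a_{e_i,u'_i})\bigr)\cdot q$. Extracting the coefficient of the monomial witnessing $AT(q)$ from this identity produces some $p_{\sigma^{S}}\in\mathcal{P}(p)$ in which that same coefficient is nonzero; taking the minimum over $q\in\mathcal{Q}$ then finishes the proof.

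The only real subtlety is that the full unbalance must be matched to the specific pair $\{v_i,w_i\}$ imposed by $q$: the naive choice $\sigma_i^{0}=\mathrm{id}$, $\sigma_i^{1}=(v_i\,w_i)$ fails exactly when $a_{e_i,v_i}=a_{e_i,w_i}$, and the role of $\sigma_i^{0}$ as a routing permutation moving the unbalanced values $u_i,u'_i$ into the positions $v_i,w_i$ is what keeps the scalar factor $\prod_i(a_{e_i,u_i}-a_{e_i,u'_i})$ nonzero, which in turn is what prevents the whole alternating sum from collapsing.
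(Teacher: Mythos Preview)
Your proof is correct and follows the same strategy as the paper: both establish the inclusion $\mathcal{Q}\subseteq\operatorname{span}(\mathcal{P}(p))$ edge by edge and then read off the Alon--Tarsi inequality from a witnessing monomial. The only difference is a mild streamlining on your part: where the paper treats the single-edge identity in two cases (using the transposition $(v_i\,w_i)$ directly when $a_{e_i,v_i}\neq a_{e_i,w_i}$, and otherwise telescoping through a third vertex), your routing permutation $\sigma_i^{0}$ moves the unbalanced pair $u_i,u'_i$ into positions $v_i,w_i$ in one stroke, avoiding the case split.
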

 
\begin{proof}
Assume to the contrary that $\min \{ AT(q) : q \in \mathcal{Q} \} =k$
and $\min \{ AT(p') : p' \in \mathcal{P}(p) \} > k$, i.e. there exists a polynomial $q \in \mathcal{Q}$ containing a monomial
$c \cdot x_1^{\alpha_1}\cdots x_n^{\alpha_n}$ with $c \neq 0$.
and all the exponents satisfying $\alpha_i < k$, but no monomial of that type appears in any element of $\mathcal{P}(p)$.
 
That is, however, not possible because of the following
 
\begin{claim}\label{claim2}
$\mathcal{Q} \subseteq \mbox{span}(\mathcal{P}(p)),$ ie. every element of $\mathcal{Q}$ is a linear combination of polynomials from $\mathcal{P}(p)$.
\end{claim}
 
\begin{proof}[Proof of Claim \ref{claim2}.] Firstly, notice that the statement of the claim is true if the hypergraph has only one edge $e$, ie. every polynomial of the form
 $x_{u} - x_{w}$ where $u, w \in e$
 is a linear combination of polynomials of the form
 $\sum_{v \in e} a_{e,\sigma(v)} x_{v}$. Indeed, if $a_{e,u} \neq a_{e,w}$,
 then setting $\sigma$ to be the transposition of the vertices $u$ and $w$, we see that
 $$
 x_{u} - x_{w} = \frac{\sum_{v \in e} a_{e,v} x_{v} - \sum_{v \in e} a_{e,\sigma(v)} x_{v}}{a_{e,u} - a_{e,w}}.
 $$
 On the other hand, if $a_{e,u} = a_{e,w} = a$, then by the fact that the polynomial is fully unbalanced we can find a vertex $v \in e$ for which
 $a_{e,v} \neq a$. As a result, the previous argument can be applied to the 
 pairs $u,v$ and $v,w$, and the polynomial
 $$
 x_{u} - x_{w} = (x_{u} - x_{v}) + (x_{v} - x_{w})
 $$
 is a linear combination of polynomials from $\mathcal{P}(p)$ as the sum of two
 such combinations.
 \end{proof}
 
 Finally, observe that if every polynomial from $\mathcal{Q}$ is a linear combination of polynomials
 from $\mathcal{P}(p)$ in the single edge case, then the same is true when
 we multiply those polynomials corresponding to all edges of the hypergraph $H$. 
\end{proof}

Theorem \ref{thm:main} follows now directly from Lemmas \ref{lem:H-G} and \ref{lem:comp}. Indeed, given a hypergraph $H=(V,E)$ with density not exceeding an integer $k$, we construct by Lemma \ref{lem:H-G} a multigraph $G$ with density not exceeding $k$, whose polynomial $q$ satisfies $AT(q) \leq 2k+1$ by Theorem \ref{thm:max-e}. Invoking Lemma \ref{lem:comp} gives now the conclusion of Theorem \ref{thm:main}.

\section{Applications}\label{sec:app}

As in the case of graphs, the Alon--Tarsi number of hypergraph polynomials can be used to obtain an upper bound of the list chromatic number and the online chromatic number of hypergraphs. Even if the bound provided by Theorem \ref{thm:main} can be simply obtained from Corollary \ref{cor:deg} we next discuss a direct derivation from the Alon--Tarsi number of a hypergraph polynomial. Let $H=([n],E)$ be a hypergraph where the set of edges contains no singletons. For every edge $e=\{i_1, \ldots, i_k\}\in E$ and some linear ordering of the elements in $E$, consider the linear form
$$
L_e(x_1, \ldots, x_n)=x_{i_1}+\cdots +x_{i_{k-1}}-(k-1)x_{i_k}.
$$
Consider the hypergraph polynomial in $\C[x_1, \ldots, x_n]$:
$$
p_H(x_1, \ldots, x_n)=\prod_{e\in E} L_e(x_1,\ldots,x_n).
$$
This is a fully unbalanced hypergraph polynomial. Up to some reordering of the vertices in each edge of $H$, Theorem \ref{thm:main} ensures that 
$$
AT(p_H)\le 2\lceil {\rm ed}(H)\rceil +1.
$$
Let $s\ge 2$ be a positive integer and denote by $\mathbb{U}_s\subseteq \C$ the multiplicative group of the $s$-th roots of unity. We observe that, for $(a_1, \ldots, a_n)\in \mathbb{U}_s^n$, 
$$
p_H(a_1, \ldots, a_n)=0\; \text{if and only if}\; a_1=\cdots =a_n.
$$
Indeed, for every subset $\{i_1, \ldots, i_k\}\subseteq [n]$ the norm $|a_{i_1}+\cdots +a_{i_{k-1}}|\le k-1$, with equality if and only if $a_{i_1}=\cdots =a_{i_{k-1}}$, in which case, if $e=\{i_1, \ldots, i_k\}$ is an edge of $H$, then $L_e(a_1,\ldots a_n)=0$ if and only if $a_{i_1}=\cdots =a_{i_{k-1}}=a_{i_k}$.

Let $k=AT(p_H)$. By the Combinatorial Nullstellensatz, by choosing $s=k$, there is a choice of $(a_1, \ldots, a_n)\in\mathbb{U}_k^n$ such that $p_H(a_1, \ldots, a_n)\neq 0$. It follows that the coloring $\chi (i)=a_i, i=1, \ldots, n$ produces no monochromatic edges and $\chi (H)\le k$. Similarly, if $A_1, \ldots, A_n$ is a set of lists, each with cardinality at least $k$, then by choosing $s=|A_1\cup \cdots \cup A_n|$ the same argument shows that the list chromatic number $\chi_L(H)$ of $H$ is at most $k$. 

The bound on both numbers can be tight. For instance, the Fano plane has edge density one and chromatic number $3$, while the Alon--Tarsi number of the hypergraph polynomial defined above is upper bounded by $3$. In particular, the list chromatic number of the Fano plane is also three.

A bound for the paintability number of a hypergraph can be obtained from the Alon--Tarsi number as well. In the case of graphs the respective relation was proved for the first time by Schauz \cite{Schauz Paint Correct}, whose proof was presented in a simplified form by Zhu and Balakrishnan \cite[Section 3.9]{ZhuBal}. Recently, a different approach was used by Grytczuk, Jendro\v{l}, Zaj\k {a}c in \cite{GrytczukJZ}. To describe analogous connection for hypergraphs we shall now describe in some detail the concept of hypergraph paintability. The exposition below follows very closely that of \cite{GrytczukJZ}, with the necessary adjustments.

\begin{definition}
Let $H=(V,E)$ be a hypergraph and let $f:V \rightarrow \mathbb{N} \cup \{0\}$.
We call $H$ \emph{$f$-paintable} if the following conditions hold:
\begin{itemize}
 \item $\forall_{v \in V} f(v) > 0$,
 \item for every nonempty $X \subseteq V$ there exists an independent subset $X' \subseteq X$ (i.e. such that $e\not\subseteq X'$ for every hyperedge $e \in E $) for which the hypergraph $H-X'$ is $f'$-paintable, where the function 
 $f':V \setminus X' \rightarrow \mathbb{N} \cup \{0\}$
 is defined by $f'(v)=f(v)-1$ if $v \in X$ and $f'(v)=f(v)$ if $v \not\in X$.
\end{itemize} 
\end{definition}

Note that the above definition is formally correct because it always refers to a simpler instance of itself,
namely either $H-X'$ has fewer vertices than $H$ or $\sum_{v \in V} f'(v) < \sum_{v \in V} f(v)$ (when $X'=\emptyset$) .
The base case of paintability is $V= \emptyset$,
while the base case of non-paintability is $\exists_{v \in V} f(v) = 0$.

The definition describes formally one round of Schauz's game \cite{Schauz} -- in each
round one player (Lister) reveals a new color in the lists of the vertices belonging to $X$, and the other (Painter)
chooses the subset $X' \subseteq X$ of the vertices that will retain this color
permanently, under the condition that no edge can be monochromatic. Painter wins
if the set of uncolored vertices eventually becomes empty, and loses 
if at some moment an uncolored vertex $v$ has $f(v)=0$, meaning that no new color 
will ever be available for $v$.

Let us also call a hypergraph polynomial of $H=(V,E)$ (we denote $n=|V|$)
$$
p(x_1, \ldots, x_n)=\prod_{e\,\in\, E} \big( \sum_{i\,\in\,e} a_{e,i}\,x_{i} \big) \in \mathbb{F}[x_1, \ldots, x_n].
$$
a {\it coloring polynomial} if and only if
$$
\forall_{e \in E} \sum_{i \in e} a_{e,i} = 0.
$$
The  notion  comes from the fact that the polynomial defined above takes the value $0$ if any of the edges is monochromatic. Indeed, if it is colored $c$, then the corresponding expression in brackets takes the value $\sum_{i \in e} c\cdot a_{e,i} = 0$.

Grytczuk, Jendro{\v{l}} and Zaj\k{a}c proved the version of the following theorem for graphs in \cite{GrytczukJZ}. Schauz proved its generalization for hypergraphs in \cite{S2010}. The presented proof is a simple generalization of the version in \cite{GrytczukJZ} but we allow it to be presented for completeness and because it is an alternative argument to the one in \cite{S2010}.

\begin{theorem}\label{thm:paintability}
If a coloring polynomial of the hypergraph $H=(V,E)$ 
$$
p(x_1, \ldots, x_n)=\prod_{e\,\in\, E} \big( \sum_{i\,\in\,e} a_{e,i}\,x_{i} \big) \in \mathbb{F}[x_1, \ldots, x_n].
$$
where $\mathbb{F}$ is an arbitrary field, contains a non-zero monomial of multidegree $(\alpha_1,\ldots, \alpha_n)$, then $H$ is $f$-paintable for $f(x_i)=\alpha_i+1$.
\end{theorem}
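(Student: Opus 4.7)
I would proceed by induction on the total list weight $n+\sum_{v\in V}\alpha_v$, with the vacuous base case $V=\emptyset$. The positivity condition $f(v)>0$ is immediate from $f(v)=\alpha_v+1\geq 1$. For the inductive step, fix any nonempty $X\subseteq V$ representing Lister's move; Painter's response $X'\subseteq X$ must be independent in $H$ and must contain $Z:=\{v\in X:\alpha_v=0\}$, since otherwise $f'(v)=0$ for some $v\in X\setminus X'$ surviving in $H-X'$. Setting $\alpha'_v=\alpha_v-1$ on $X\setminus X'$ and $\alpha'_v=\alpha_v$ on $V\setminus X$, the inductive hypothesis reduces the task to producing such an $X'$ together with a coloring polynomial $p'$ of $H-X'$ whose coefficient at $\prod_{v\notin X'}x_v^{\alpha'_v}$ is nonzero. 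The natural candidate is $p':=\prod_{e\in E,\,e\cap X'=\emptyset}L_e(x)$.

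The central algebraic step is a shift-by-the-new-color trick that generalizes the approach of Grytczuk, Jendro{\v{l}}, Zaj\k{a}c for graphs. Define $p^\sharp(x):=p(x+\mathbf{1}_X)$, the polynomial obtained by replacing $x_v$ by $x_v+1$ for each $v\in X$. The coloring polynomial identity $\sum_{i\in e}a_{e,i}=0$ implies $L_e(x+\mathbf{1}_X)=L_e(x)+c_e$ with $c_e:=\sum_{i\in e\cap X}a_{e,i}$, and crucially $c_e=0$ whenever $e\cap X\in\{\emptyset,e\}$. Expanding gives
$$p^\sharp(x)=\sum_{T\subseteq E}\Bigl(\prod_{e\in T}c_e\Bigr)\Bigl(\prod_{e\notin T}L_e(x)\Bigr),$$
so that the only nonzero contributions come from $T$ consisting of edges that properly straddle $X$. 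Each factor $\prod_{e\notin T}L_e$ is itself the coloring polynomial of the subhypergraph $(V,E\setminus T)$, and the plan is to set up a correspondence $T\mapsto X'_T$ so that the edges of $H-X'_T$ coincide with $E\setminus T$; non-independent candidates drop out automatically, since $e\subseteq X'_T$ would force $c_e=0$ and kill the summand.

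The main obstacle will be the monomial-level bookkeeping. Because $p$ is homogeneous of degree $|E|$, the coefficient of the top monomial $\prod x_v^{\alpha_v}$ in $p^\sharp$ merely equals $c$, yielding a tautology. One must instead target a lower-degree monomial $M^\star$ in $p^\sharp$, chosen so that its coefficient, read via the Taylor expansion of $p$ around $\mathbf{1}_X$, is a nontrivial linear combination in which the contribution $c\cdot\prod\binom{\alpha_v}{\alpha^\star_v}$ survives. Comparing with the displayed expansion then identifies each summand with the coefficient of the corresponding monomial in the coloring polynomial of some $H-X'_T$, and the nonvanishing of $c$ forces at least one summand to be nonzero, furnishing the required witness $X'=X'_T$ on which to apply the inductive hypothesis. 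Iterating the shift one vertex of $X$ at a time, as in the graph argument, appears to be the cleanest way to manage the degree and index accounting.
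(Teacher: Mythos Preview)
Your edge-indexed expansion is where the argument breaks down. After writing
\[
p^\sharp(x)=\sum_{T\subseteq E}\Bigl(\prod_{e\in T}c_e\Bigr)\Bigl(\prod_{e\notin T}L_e(x)\Bigr),
\]
you need the surviving summands to be coloring polynomials of subhypergraphs $H-X'_T$ for some \emph{vertex} sets $X'_T\subseteq X$. That would require $T=\{e\in E:e\cap X'_T\neq\emptyset\}$, but an arbitrary $T$ (even one whose edges all straddle $X$ properly) need not be the neighbourhood of any vertex set; many distinct $T$'s correspond to no $X'$ at all, and conversely a single $X'$ determines one $T$ but the monomial you want may be spread over several. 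So the correspondence $T\mapsto X'_T$ cannot be made to work, and the summands $\prod_{e\notin T}L_e$ are not the polynomials $p'$ you identified as the natural candidates. The ``main obstacle'' you flag is thus not merely bookkeeping; the decomposition itself is along the wrong index set.

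The paper's proof (which is precisely the Grytczuk--Jendro\v{l}--Zaj\k{a}c argument you are trying to generalize) indexes the inclusion--exclusion by subsets of $X$ rather than of $E$. Writing $X=\{x_1,\ldots,x_k\}$ and $V\setminus X=\{y_1,\ldots,y_l\}$, one introduces projection operators $\Pi_{x_i}^{<\alpha_i}$ (onto monomials of $x_i$-degree $<\alpha_i$) and $\Pi_{y_j}^{\beta_j}$ (onto monomials of $y_j$-degree $=\beta_j$). Then
\[
(I-\Pi_{x_1}^{<\alpha_1})\circ\cdots\circ(I-\Pi_{x_k}^{<\alpha_k})\circ\Pi_{y_1}^{\beta_1}\circ\cdots\circ\Pi_{y_l}^{\beta_l}(p)
\]
is the single target monomial, and evaluating at $(1,\ldots,1)$ gives the nonzero coefficient. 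Expanding the $(I-\Pi)$ factors produces $2^k$ terms indexed by subsets $X'\subseteq X$, so some term $\Pi_{x_{j+1}}^{<\alpha_{j+1}}\circ\cdots(p)(1,\ldots,1)$ is nonzero; this yields directly a monomial of the required reduced multidegree. The factorisation $p=p_{H-X'}\cdot q_H$ then pushes that monomial down to $p_{H-X'}$, and the coloring condition $\sum_{i\in e}a_{e,i}=0$ forces $X'$ to be independent (else $p(1,\ldots,1,x_{j+1},\ldots)$ would vanish identically). Your closing remark about iterating one vertex of $X$ at a time is, in effect, this argument; but the edge-based shift you wrote out does not implement it.
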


The theorem is a direct inductive consequence of the following.

\begin{lemma}
Let $X=\{x_1,\ldots,x_k\} \subseteq V$ and let $V \setminus X=\{y_1,\ldots,y_l\},$ where $k \geqslant 1, l \geqslant 0.$ If $p_H$ contains a non-zero monomial of multidegree $(\alpha_1,\ldots, \alpha_k, \beta_1,\ldots,$ $ \beta_l),$ then there exists an independent subset $X' \subseteq X,$ (to simplify the notation we may renumber the elements of $X$ and assume that $X'=\{x_1,\ldots,x_j\}, j \geqslant 0$), for which $p_{H-X'}$ contains a non-zero monomial of multidegree at most $(\alpha_{j+1}-1,\ldots, \alpha_k-1, \beta_1,\ldots, \beta_l).$
\end{lemma}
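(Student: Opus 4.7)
My plan is to prove this by induction on $|X|=k$. For the base case $k=1$, write $X=\{v\}$; since every edge has size at least two, $\{v\}$ is independent in $H$, so I set $X'=\{v\}$. Using the factorisation $p_H=p_{H-v}\cdot\prod_{e\ni v}L_e$ and expanding the second factor as $\sum_{i=0}^{d_v}x_v^{i}S_i$ with $S_i\in\F[x_{V\setminus\{v\}}]$, the hypothesis $[x_v^{\alpha_v}y^\beta]p_H\ne 0$ becomes $\sum_{\beta'+\beta''=\beta}[y^{\beta'}]p_{H-v}\cdot[y^{\beta''}]S_{\alpha_v}\ne 0$, so some $[y^{\beta'}]p_{H-v}\ne 0$ with $\beta'\le(\beta_1,\ldots,\beta_l)$ coordinatewise, completing the base case.

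For the inductive step $k\ge 2$, I would choose $X'$ in two stages. First, any $v\in X$ with $\alpha_v=0$ must lie in $X'$, since otherwise the required exponent bound $\alpha_v-1=-1$ is negative. Let $X^*:=\{v\in X:\alpha_v=0\}$. One checks $X^*$ is independent by expanding $p_H=\sum_{\phi:E\to V}\bigl(\prod_e a_{e,\phi(e)}\bigr)\prod_e x_{\phi(e)}$: any witness $\phi$ of $M$ requires $\phi(e)\in e\subseteq X^*$ whenever $e\subseteq X^*$, while $|\phi^{-1}(u)|=\alpha_u=0$ for every $u\in X^*$, a contradiction. Next, enlarge $X^*$ greedily to a maximal independent $X'\subseteq X$; by maximality every $v\in X\setminus X'$ lies in some edge $e_v$ with $e_v\setminus\{v\}\subseteq X'$, and these edges $e_v$ are distinct as $v$ varies (otherwise $v_1\in e_{v_2}\setminus\{v_2\}\subseteq X'$ contradicts $v_1\in X\setminus X'$).

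With $X'$ in hand, the second step is to exhibit the required non-zero monomial in $p_{H-X'}$. Fix a witness $\phi$ of $M$ and set $\phi':=\phi|_{E(H-X')}$. Then $|\phi'^{-1}(v)|=0$ for $v\in X'$ automatically, and $|\phi'^{-1}(v)|\le\beta_v$ for $v\notin X$. For $v\in X\setminus X'$ one needs $|\phi'^{-1}(v)|\le\alpha_v-1$, i.e., at least one edge of $\phi^{-1}(v)$ should lie in $\{e\in E:e\cap X'\ne\emptyset\}$. I would arrange this by a sequence of alternating exchanges on $\phi$ driven by the edges $e_v$: whenever some $v\in X\setminus X'$ has $\phi^{-1}(v)$ entirely disjoint from the family of edges meeting $X'$, swap $\phi(e_v)$ with $\phi(e)$ for a suitable $e\in\phi^{-1}(v)$ so that a preimage of $v$ moves into the touching-$X'$ family, keeping the degree sequence $(\alpha,\beta)$ unchanged. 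After finitely many exchanges, $\phi'$ witnesses a monomial $M'$ of $p_{H-X'}$ of multidegree at most $(\alpha_{j+1}-1,\ldots,\alpha_k-1,\beta_1,\ldots,\beta_l)$.

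The main obstacle I anticipate is the last step: ensuring that the coefficient $[M']p_{H-X'}=\sum_{\phi''}\prod_e a_{e,\phi''(e)}$ does not vanish, rather than merely producing a single selection $\phi'$, since distinct selections can cancel algebraically. I expect the remedy is to exploit the factorisation $p_H|_{x_{X'}=0}=p_{H-X'}\cdot R_{X'}$, where $R_{X'}=\prod_{e:e\cap X'\ne\emptyset}\bigl(\sum_{i\in e\setminus X'}a_{e,i}x_i\bigr)$, together with a careful coefficient extraction along $X'\setminus X^*$ via partial derivatives, or the Schauz-type translation invariance $p_H(x+c\mathbf{1})=p_H(x)$ enjoyed by coloring polynomials, to propagate the non-vanishing of $[M]p_H$ to the required non-vanishing of $[M']p_{H-X'}$. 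Reconciling this propagation with the greedy enlargement $X^*\subseteq X'$ is the crux of the argument.
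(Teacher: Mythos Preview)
Your base case is correct, but the inductive step has a real gap that runs deeper than the cancellation issue you flag at the end. The trouble is that you commit to $X'$---a maximal independent extension of $X^*$---\emph{before} knowing where $p_{H-X'}$ is supported, and the wrong maximal extension can simply fail. Take $V=\{1,2,3,4\}$, $E=\bigl\{\{1,2\},\{3,4\},\{1,3\}\bigr\}$, the coloring polynomial $p_H=(x_1-x_2)(x_3-x_4)(x_1-x_3)$, $X=V$, and the monomial $x_1x_3x_4$ (which appears with coefficient $+1$). Then $X^*=\{2\}$, and both $\{2,3\}$ and $\{2,4\}$ are maximal independent subsets of $X$ containing $X^*$. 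If your greedy step happens to return $X'=\{2,4\}$, the edge $\{1,3\}$ survives in $H-X'$, so $p_{H-X'}=x_1-x_3$, which has no monomial of multidegree $\le(\alpha_1-1,\alpha_3-1)=(0,0)$. No exchange argument on a single selection $\phi$, and no use of the factorisation $p_H|_{x_{X'}=0}=p_{H-X'}\cdot R_{X'}$, can repair this: the desired monomial in $p_{H-X'}$ simply does not exist for that $X'$, while $X'=\{2,3\}$ would have worked. (Note also that your inductive framing is never actually used; the ``inductive step'' does not invoke the hypothesis for smaller $k$.)

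The paper's proof runs in the opposite order: the algebra chooses $X'$, and only afterwards is independence verified. One applies the projection $T=(I-\Pi_{x_1}^{<\alpha_1})\circ\cdots\circ(I-\Pi_{x_k}^{<\alpha_k})\circ\Pi_{y_1}^{\beta_1}\circ\cdots\circ\Pi_{y_l}^{\beta_l}$ to $p_H$; by homogeneity $T(p_H)$ reduces to the single assumed monomial, so $T(p_H)(1,\ldots,1)\ne 0$. Expanding the product of the $(I-\Pi)$ factors over all $2^k$ subsets of $[k]$, some subset $S$ contributes a nonzero evaluation, and one sets $X'=\{x_i:i\notin S\}$. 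The nonzero evaluation at the all-ones point then certifies independence via the coloring condition $\sum_{i\in e}a_{e,i}=0$: an edge $e\subseteq X'$ would make the factor $\sum_{i\in e}a_{e,i}x_i$ vanish under $x_{X'}\mapsto 1$, forcing that evaluation to zero. Thus the coloring hypothesis is not a patch to be bolted on later but the very mechanism that makes the algebraically selected $X'$ admissible; your proposal treats it only as a possible afterthought, which is why the argument does not close.
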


\begin{proof}
First let us define projection operators of the following two types acting on the vector space  $\mathbb{F} [x_1,\ldots,x_k,y_1,\ldots,y_l]$:
\begin{itemize}
 \item $\Pi_{x_i}^{<\alpha_i}$, whose image is spanned by the polynomials $M$
 satisfying $\deg_{x_i}(M)<\alpha_i$, and kernel by those $N$
 for which $\deg_{x_i}(N) \geqslant \alpha_i$;
 \item $\Pi_{y_i}^{\beta_i}$, whose image is spanned by the polynomials $M$
 satisfying $\deg_{y_i}(M) = \beta_i$, and kernel by those $N$
 for which $\deg_{y_i}(N) \not= \beta_i$.
\end{itemize}

Let us now consider the following composition:
$$
T=(I-\Pi_{x_1}^{<\alpha_1}) \circ \ldots \circ (I-\Pi_{x_k}^{<\alpha_k})
\circ\Pi_{y_1}^{\beta_1}\circ
\ldots \circ\Pi_{y_l}^{\beta_l}.
$$
We easily see that in $T(p)$ only the monomials $M$
satisfying $\deg_{x_i}(M) \geqslant \alpha_i$ and $\deg_{y_i}(M) = \beta_i$
for all $i$ will remain unannihilated. But as $p$ is homogeneous, 
there is only one such monomial, namely the one of multidegree $(\alpha_1,\ldots, \alpha_k,
\beta_1,\ldots, \beta_l),$ which does not vanish by the assumption.
Consequently, substituting all values $x_i$ and $y_i$ equal to $1$ will yield a non-zero
constant:
$$
(I-\Pi_{x_1}^{<\alpha_1}) \circ \ldots \circ (I-\Pi_{x_k}^{<\alpha_k})
\circ\Pi_{y_1}^{\beta_1}\circ
\ldots \circ\Pi_{y_l}^{\beta_l} (p)(1,\ldots,1) \in \mathbb{F} \setminus \{0\}.
$$

On the other hand, the above expression is, by linearity, a combination
of $2^k$ expressions of the form
$$
\Pi_{x_{i_1}}^{<\alpha_{i_1}} \circ \ldots \circ \Pi_{x_{i_t}}^{<\alpha_{i_t}}
\circ\Pi_{y_1}^{\beta_1}\circ
\ldots \circ\Pi_{y_l}^{\beta_l} (p)(1,\ldots,1),
$$
corresponding to all $2^k$ subsets $\{i_1,\ldots ,i_t\}$ of $\{1,\ldots,k\}$.


 At least one of them
has therefore to be non-zero, hence, up to renumbering,  at least one of
$$
\Pi_{x_{j+1}}^{<\alpha_{j+1}} \circ \ldots \circ \Pi_{x_k}^{<\alpha_k}
\circ\Pi_{y_1}^{\beta_1}\circ
\ldots \circ\Pi_{y_l}^{\beta_l} (p)
$$
has to be a non-zero polynomial. It follows that $p$ has a non-vanishing monomial of degrees
at most $(\alpha_{j+1}-1,\ldots, \alpha_k-1)$ in the variables 
$(x_{j+1},\ldots, x_k)$, exactly 
$(\beta_1,\ldots, \beta_l)$ in $(y_1,\ldots, y_l)$,
and arbitrary in $(x_1,\ldots,x_j)$.

Let us now see that $X'=\{x_1, \ldots,x_j\}$ satisfies the conditions of the lemma:
\begin{itemize}
\item $p_{H-X'}$  has a monomial of degree at most $(\alpha_{j+1}-1,\ldots, \alpha_k-1, \beta_1,\ldots, \beta_l)$: indeed, $p_H$ can be factorized as $p_H=p_{H-X'}q_H$, where $q_H$ is the product of all linear factors  of $p$ corresponding to the hyperedges in $E$ that have at least one vertex in $X'$;  as $p$ has a monomial of degrees at most $(\alpha_{j+1}-1,\ldots, \alpha_k-1, \beta_1,\ldots, \beta_l)$ in the variables $(x_{j+1},\ldots, x_k,y_1,\ldots, y_l)$, the same is true for $p_{H-X'}$ by the very definition of polynomial multiplication.
\item the set $X'$ is independent: indeed, if there is a hyperedge $e \subseteq X'$, then $p$ contains the factor $\sum_{i\in e} a_{e,i} x_{i}$ which by the assumption $\sum_{i \in e} a_{e,i} =0$ implies $p(1,1,\dots,1,x_{j+1},\dots,x_k,y_1,\dots,y_l)$ is identically equal to $0$ as a polynomial of $n-j$ variables, contradicting
$$
\Pi_{x_{j+1}}^{<\alpha_{j+1}} \circ \ldots \circ \Pi_{x_k}^{<\alpha_k} \circ\Pi_{y_1}^{\beta_1}\circ \ldots \circ\Pi_{y_l}^{\beta_l} (p)(1,\ldots,1) \in \mathbb{F} \setminus \{0\},
$$
because the operator of evaluation at $x_i=1$ commutes with both $\Pi_{x_j}^{<\alpha_j}$ for $j \not=i$ and $\Pi_{y_j}^{\beta_j}$ .
\end{itemize}

\end{proof}

As mentioned in the Introduction, as it was communicated to us by Grytczuk \cite{GrytczukPersonal}, the extension of Theorem \ref{thm:main} to every hypergraph polynomial stated in Conjecture \ref{conj:1} provides a simple proof of the 1-2-3 Conjecture of Karo{\'n}ski, {\L}uczak and Thomason. The 1-2-3 Conjecture, now a theorem proved by Keusch \cite{K2023}, states that every connected graph $G=(V, E)$ different from $K_2$ admits an assignment of weights in $\{1,2,3\}$ to the edges such that no two neighbors receive the same sum of incident edge weights. In other words, giving to every vertex the sum of edge weights of its incident edges results in a proper coloring of the graph. 

In order to prove the 1-2-3 Conjecture from Conjecture \ref{conj:1}, given a connected graph $G=(V,E)$ having at least $3$ vertices we may consider the hypergraph $H(G)=(V',E')$ where $V'=E=\{e_1, \ldots, e_m\}$ and, for each edge $e=uv\in E$, there is a (hyper)edge $f_e\in E'$ consisting of all edges incident to $u$ and to $v$ except $e$ (i.e., containing precisely all the edges from the open neighborhood of $e$ in the line graph $L(G)$).

We then consider the hypergraph polynomial 
$$
p_{H(G)}(x_{e_1}, \ldots, x_{e_m})=\prod_{uv\in E} \left(\sum_{e_i\in f_e:u\in e_i} x_{e_i}-\sum_{e_j\in f_e:v\in e_j}x_{e_j}\right).
$$
Thus, $p_{H(G)}$ is nonzero at $(a_1, \ldots, a_m)$ if and only if the assignment of weights $w(e_i)=a_i$ satisfies the requirement that no two neighbors receive the same sum of incident edge weights. We next show that $H(G)$ has edge density at most one for every connected graph $G$ except trees of a special kind, which will be considered separately.

We shall use the standard notation $N(v)=\{w \in V : vw \in E\}$ for the open neighborhood of a vertex $v \in V$ and denote further $N(X)=\bigcup_{v \in X} N(v)$.

 \begin{proposition}\label{prop-graph1}
 Let $H=(V,E)$ be a graph without vertices of degree 0 or 1. If there exists a subset $X \subseteq V$ for which $|N(X)|<|X|$, then $H$ contains an induced claw, i.e. the graph $K_{1,3}$.
 \end{proposition}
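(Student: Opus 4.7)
The plan is to exhibit an explicit induced $K_{1,3}$ via a short counting argument on a natural partition of $X\cup N(X)$, placing the center of the claw outside $X$.

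I would first set $Y := X\setminus N(X)$ and $Y' := N(X)\setminus X$, so that $X\cup N(X)$ decomposes as $(X\cap N(X))\sqcup Y\sqcup Y'$ and $|X|-|N(X)| = |Y|-|Y'|$. The hypothesis $|N(X)|<|X|$ therefore translates into $|Y|>|Y'|$, and in particular $Y\neq\emptyset$. Any $y\in Y$ has no neighbor inside $X$ (otherwise $y$ would itself lie in $N(X)$), so $N(y)\subseteq N(X)\setminus X = Y'$; together with the hypothesis that every vertex has degree at least $2$, this gives $|N(y)\cap Y'|\ge 2$ for every $y\in Y$.

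Next I would double count the edges running between $Y$ and $Y'$:
\[
\sum_{u\in Y'}|N(u)\cap Y|\;=\;\sum_{y\in Y}|N(y)\cap Y'|\;\ge\;2|Y|\;>\;2|Y'|.
\]
Averaging over $u\in Y'$ then yields some $u\in Y'$ with $|N(u)\cap Y|\ge 3$, and I would pick three such neighbors $y_1,y_2,y_3\in Y$ of $u$.

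Finally I would observe that $Y$ is an independent set of $H$: if $y_iy_j\in E$ with $y_i,y_j\in Y\subseteq X$, then $y_i\in N(y_j)\subseteq N(X)$, contradicting $y_i\in X\setminus N(X)$. Since $u\in V\setminus X$ is distinct from each $y_i\in X$, and $u$ is adjacent to each of $y_1,y_2,y_3$ while those three vertices are pairwise non-adjacent, the set $\{u,y_1,y_2,y_3\}$ induces the desired $K_{1,3}$. I do not foresee any serious obstacle here: the whole proof is driven by the partition above, the strict inequality $|Y|>|Y'|$, and the minimum-degree hypothesis.
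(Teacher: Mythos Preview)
Your proof is correct and follows essentially the same route as the paper: the paper defines the same sets $Y=X\setminus N(X)$ and $Z=N(X)\setminus X$ (your $Y'$), uses the identical observation that every $y\in Y$ has all of its $\ge 2$ neighbors in $Z$, and then applies the same pigeonhole step $|F|\ge 2|Y|>2|Z|$ to locate a center in $Z$ with three independent neighbors in $Y$. The only differences are cosmetic (notation and the explicit ``double counting'' phrasing), so there is nothing to add.
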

 
 \begin{proof}
  Let $Y=X \setminus N(X), Z=N(X) \setminus X$. We easily see that 
 $$
 |Z|=|N(X)|-|N(X) \cap X| < |X|-|N(X) \cap X| = |Y|.
 $$
 Moreover for every edge $vw \in E$ we have
 \begin{align}
 \label{eq-graph1}
 v \in Y \Leftrightarrow (v \in X \wedge v \not\in N(X)) \Rightarrow
 (w \in N(X) \wedge w \not\in X) \Leftrightarrow w \in Z.
 \end{align}
 Consequently, the set $F=\{yz \in E : y \in Y \wedge z \in Z\}$ has at least $2|Y|$ elements, because by the assumption every $y \in Y$ has at least two neighbors in $V(H)$ and by (\ref{eq-graph1}) they all belong to $Z$.
 
 On the other hand, the inequality $|F| \geq 2|Y| > 2|Z|$ implies by the pigeonhole principle that there exists a vertex $z \in Z$ adjacent to at least three distinct vertices $y_1, y_2, y_3 \in Y$. Thus an induced claw is formed, because again by (\ref{eq-graph1})
 no two of the vertices $y_1, y_2, y_3$ can be adjacent since $Y \cap Z = \emptyset$.
 
 \end{proof}
 
The following comes immediately.

\begin{corollary}\label{cor-graph1}
If $G$ is a graph whose line graph does not have vertices of degree $0$ or $1$, then there exists a bijection $f : E(G) \rightarrow E(G)$ with the property that for every $e \in E(G)$ the edges $e$ and $f(e)$ have exactly one vertex in common, i.e. the corresponding vertices in $L(G)$ are adjacent.
\end{corollary}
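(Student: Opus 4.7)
The strategy is to reformulate the desired bijection as a system of distinct representatives problem, so that Hall's theorem reduces it to a neighborhood-expansion condition, which in turn follows from Proposition \ref{prop-graph1} once we recall that line graphs are claw-free.

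First, I would translate the statement into the language of $L(G)$. Set $H=L(G)$; the vertices of $H$ are the edges of $G$, and two vertices of $H$ are adjacent precisely when the corresponding edges of $G$ share exactly one endpoint. Thus a bijection $f : E(G)\to E(G)$ as required is nothing but a bijection $f : V(H) \to V(H)$ with $f(v)\in N_H(v)$ for every $v \in V(H)$.

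Next, I would model the existence of such a bijection as a perfect matching in the bipartite graph $B$ with two copies $V_1, V_2$ of $V(H)$ and an edge from $v\in V_1$ to $w\in V_2$ whenever $vw\in E(H)$. A perfect matching in $B$ is exactly a bijection $f$ with $f(v)\in N_H(v)$ for all $v$. By Hall's marriage theorem, $B$ admits a perfect matching if and only if $|N_H(X)|\geq |X|$ for every $X\subseteq V(H)$.

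The heart of the argument is to verify Hall's condition, and this is where Proposition \ref{prop-graph1} is used in contrapositive form: if a graph has minimum degree at least $2$ and Hall's condition fails, then it contains an induced $K_{1,3}$. By hypothesis, $H=L(G)$ has no vertex of degree $0$ or $1$, so its minimum degree is at least $2$. Moreover, line graphs are well known to be claw-free (the three edges of $G$ corresponding to a claw in $L(G)$ would all share a common endpoint, but then each pair of them would share that endpoint, so the three vertices of the claw would be pairwise adjacent in $L(G)$, a contradiction). Hence $H$ is claw-free with minimum degree $\geq 2$, and by Proposition \ref{prop-graph1} no subset $X$ can violate $|N_H(X)|\geq |X|$. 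Hall's theorem then yields the desired bijection, and the only potential obstacle, namely verifying Hall's condition, has been handled.
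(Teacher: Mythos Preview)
Your proof is correct and follows essentially the same route as the paper: set $H=L(G)$, invoke claw-freeness of line graphs together with Proposition~\ref{prop-graph1} to get $|N_H(X)|\ge|X|$ for all $X$, and conclude via Hall's theorem. One minor imprecision in your parenthetical: the three leaf-edges of a would-be claw need not \emph{all} share a common endpoint---rather, each shares an endpoint with the center edge $e$, and since $e$ has only two endpoints, pigeonhole forces two of the leaves to share an endpoint with each other, contradicting the induced-claw structure.
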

 
 \begin{proof}
 We apply Proposition \ref{prop-graph1} to the graph $H=L(G)$. As no line graph can contain an induced claw, we deduce that for all subsets $X \subseteq V(H)$ the inequality $|N_H(X)| \geq |X|$ holds. The statement now follows from Hall's theorem.
 \end{proof}

 Let us also analyze what happens when the graph $L(G)$ contains vertices of degree $0$ or $1$. Assuming that $G$ is connected, the former is only possible when $G \simeq K_2$, and this graph is explicitly excluded from the statement of the 1-2-3 Conjecture.
 
 On the other hand $\deg_{L(G)}(uv)=1 \Leftrightarrow \deg_G(u)+\deg_G(v)=3$, meaning that one of the degrees (without loss of generality $\deg(u)$) equals 1, the only neighbor of $u$ is $v$, and $v$ has one neighbor other than $u$. Let us introduce the name {\it 2-pendant} to describe the structure consisting of a vertex of degree 1, named $u$, a vertex of degree 2, named $v$, the edge $uv$ and one more edge attaching $v$ to some vertex $w \in V(G) \setminus \{u,v\}$. 
 
 Note that a graph $G$ with a 2-pendant on vertices $u$ and $v$ has a bijection described in Corollary \ref{cor-graph1} if and only if the graph $G - u - v$ obtained by removing the 2-pendant has such a bijection. The reason is that in the required bijection of $E(G)$ the edges $uv$ and $vw$ have to mapped onto each other as $uv$ has in $L(G)$ no neighbor other than $vw$.
 
 We may therefore conclude that Corollary \ref{cor-graph1} is also true for many graphs with 2-pendants, namely the graphs in which the iterative procedure of removing one 2-pendant after another until no 2-pendants exist ends in a graph different from $K_2$. This is in particular always the case when $G$ is connected and not a tree, because removing 2-pendants does not destroy any cycles existing in $G$.

Note also that a graph $G$ has a perfect matching if and only if the graph $G-u-v$ resulting from removing the $2$-pendant on vertices $u$ and $v$ has a perfect matching $M$ and the perfect matching in $G$ is precisely $M\cup \{uv\}$. Thus finally one can observe that the above results cannot be applied only to the trees with perfect matching. This allows us to formulate the following.

\begin{corollary}
For every connected graph $G=(V,E)$, not being a tree with a perfect matching the hypergraph $H(G)=(V^\prime,E^\prime)$ has edge density $ed(H(G))\le 1$.
\end{corollary}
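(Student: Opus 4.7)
The plan is to derive $\mathrm{ed}(H(G)) \le 1$ directly from the existence of a bijection on $E(G)$ guaranteed by Corollary \ref{cor-graph1} together with the subsequent $2$-pendant discussion. Under the hypothesis that $G$ is connected and not a tree with a perfect matching, the iterative removal of $2$-pendants from $G$ terminates in a graph different from $K_2$; applying Corollary \ref{cor-graph1} to this reduced graph (whose line graph then has minimum degree at least $2$) and extending through the $2$-pendant stages one stage at a time (each peeled-off $2$-pendant contributes a swap of its two edges, which share the degree-$2$ vertex) yields a bijection $\varphi \colon E(G) \to E(G)$ such that for every $e \in E(G)$, the edges $e$ and $\varphi(e)$ share a vertex. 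Equivalently, $\varphi(e) \in f_e$, where $f_e$ is the hyperedge of $H(G)$ associated with $e$, namely the open neighborhood of $e$ in $L(G)$.

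First I would unpack the definition of edge density in the context of $H(G)$. Since the hyperedges of $H(G)$ are in one-to-one correspondence with the edges of $G$, proving $\mathrm{ed}(H(G)) \le 1$ is equivalent to proving that for every nonempty $X \subseteq V' = E(G)$ the set
\[
Y_X = \{e \in E(G) : f_e \subseteq X\}
\]
satisfies $|Y_X| \le |X|$, because $|E'(X)| = |Y_X|$ by construction.

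Then I would apply the bijection $\varphi$ from the first paragraph. For every $e \in Y_X$ one has $\varphi(e) \in f_e \subseteq X$, so $\varphi$ restricts to a map $Y_X \to X$; since $\varphi$ is a bijection of $E(G)$, this restriction is injective, hence $|Y_X| \le |X|$. Taking the supremum over nonempty $X$ gives $\mathrm{ed}(H(G)) \le 1$.

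The main conceptual content of the argument is thus not in the short counting above but in the existence of the bijection $\varphi$; that step has already been discharged by Proposition \ref{prop-graph1} and Corollary \ref{cor-graph1} via Hall's theorem and claw-freeness of line graphs, together with the reduction by $2$-pendants. The only obstruction to this reduction is the case in which the procedure collapses to $K_2$, whose single edge admits no adjacent partner; this case corresponds precisely to trees with a perfect matching, which is exactly the class excluded from the hypothesis, so the argument goes through for all graphs in the corollary's scope.
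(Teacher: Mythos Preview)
Your argument is correct and follows essentially the same route as the paper: both obtain the bijection $\varphi$ (the paper calls it $f$) from Corollary~\ref{cor-graph1} together with the $2$-pendant discussion, and both use it to inject the set of hyperedges contained in $X$ into $X$. The paper phrases the counting via the composed bijection $h=g\circ f:E\to E'$ with $e\in h(e)$ and the inclusion $h^{-1}(E'(X))\subseteq X$, whereas you work directly with $\varphi$ and the observation $\varphi(e)\in f_e\subseteq X$; these are the same injection viewed from opposite ends, and your explicit treatment of the $2$-pendant reduction is if anything a bit more careful than the paper's shorthand.
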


\begin{proof}
 Let $f:E\rightarrow E$ be a bijection guaranteed by the Corollary \ref{cor-graph1}. There is a bijection $g:E \rightarrow E^\prime$ defined in a natural way with $g(e)=f_e$. Consider their composition $h=g\circ f$. It is also a bijection such that $h:E\rightarrow E^\prime$ and for every $e\in E$, $e\in h(e)$.

Now, consider any subset of the vertices of $H(G)$, say $X\subseteq V^\prime=E$. The bijection $h$ matches every hyperedge in $E^\prime(X)$ with a distinct vertex $e\in X$, thus $h^{-1}(E^\prime(X))\subseteq X$ and consequently $|E^\prime(X)|=|h^{-1}(E^\prime(X))|\leq |X|$, which completes the proof.
\end{proof}

Given a graph $G=(V,E)\neq K_2$, let us define the following polynomial, where the variables are assigned to the edges:
$$
p_G(x_{e_1}, \ldots, x_{e_m})=\prod_{uv\in E} \left(\sum_{e_i\ni u} x_{e_i}-\sum_{e_j\ni v}x_{e_j}\right).
$$
 It is straightforward to see, that when putting some labels $a_{e_1},a_{e_2},\dots,a_{e_m}$ on the edges, the obtained vertex coloring induced by the sums of labels on the edges incident to the vertices is proper if and only if $p_G(a_{e_1},a_{e_2},\dots,a_{e_m})\neq 0$. The above considerations imply that if Conjecture \ref{conj:1} holds true, then $AT(p_G)\leq 3$ for every graph $G$ not being a tree with perfect matching. Actually it would be enough to prove Conjecture \ref{conj:1} for hypergraph polynomials with coefficients $\pm 1$. On the other hand, Bartnicki, Grytczuk and Niwczyk \cite[Corollary 3]{BGN} proved in particular that $AT(p_T)\leq 3$ for every tree $T\neq K_2$. Thus Conjecture \ref{conj:1} implies Conjecture 3 from \cite{BGN} stating that $AT(p_G)\leq 3$ for every connected graph $G\neq K_2$. According to our previous considerations it further implies the choosability version of the 1-2-3 Conjecture for every graph $G\neq K_2$ (even its paintability version if in addition Conjecture \ref{conj:PCN} holds true) and in consequence, the 1-2-3 Conjecture itself. As already mentioned, the latter one was recently proved by Keusch \cite{K2023}. On the other hand, to our knowledge, the best bound for the list version of the 1-2-3 Conjecture is five colors, as proved by Zhu \cite{Z2022}.

An interesting generalization of the 1-2-3 conjecture worth mentioning is the paintable version with the following definition.

\begin{definition}
Let $G=(V, E)$ be a graph, let $w: E_0 \rightarrow \mathbb{F}$ be 
a function from the subset of edges $E_0 \subseteq E$ to the set of elements of some field $\mathbb{F}$ and let $A\subseteq\mathbb{F}$ be its arbitrary subset. Moreover, let $f:E\setminus E_0 \rightarrow \mathbb{N} \cup \{0\}$.
We say that the ordered pair $(G,w)$ is \emph{$(f,A)$-unbalanceable} if the following conditions hold:
\begin{itemize}
\item $A\neq\emptyset$,
\item $f(e)>0$ for every $e\in E \setminus E_0$,
\item if $E_0=E$ (i.e. $w: E \rightarrow \mathbb{F}$), then the weighting $w$ satisfies the condition stated in the 1-2-3 Conjecture, that is, for any two distinct vertices $v,v' \in V$ we have $ \sum_{e\ni v} w(e) \neq \sum_{e'\ni v'} w(e')$,
\item if $E_0 \subsetneq E$, then for every nonempty $X \subseteq E\setminus E_0$ and any member $a\in A$ there exists a subset $X' \subseteq X$ for which the pair $(G,w')$ is $(f',A \setminus\{a\})$-unbalanceable, where
\begin{itemize}[label=$\circ$]
\item $w':E_0 \cup X' \rightarrow \mathbb{F}$ is defined by $w'(e)=w(e)$ if $e \in E_0$ and $w'(e)=a$ if $e \in X'$,
\item $f':E \ \setminus \ (E_0 \cup X') \rightarrow \mathbb{N} \cup \{0\}$ is defined by $f'(e)=f(e)-1$ if $e \in X$ and $f'(e)=f(e)$ if $e \not\in X$.
\end{itemize}
\end{itemize}
\end{definition}

Note that the given definition is correct in the sense that it refers to a simpler instance of itself, because either $X'\neq \emptyset$ or $\sum_e f'(e)<\sum_e f(e)$.

\begin{definition}
We say that graph $G=(V,E)$ is \emph{$f$-unbalanceable} if $(G,\emptyset)$ is $(f,A)$-unbalanceable for arbitrarily chosen infinite set $A\subseteq\mathbb{F}$, where $\emptyset$ denotes the empty function, i.e. the function with empty domain.
\end{definition}

Using the above definitions, one can propose the following intriguing twin of Theorem \ref{thm:paintability} (let us call it \emph{\textbf{playable 1-2-3 Conjecture}}).

\begin{restatable}{conjecture}{conPlayable} \label{conj:PCN}
Let $G=(V, E)$ be a graph without isolated edges. Then $G$ is $f$-unbalanceable where $f:E\rightarrow \mathbb{N}\cup\{0\}$ is defined by $f(e)=3$ for every $e\in E$.
\end{restatable}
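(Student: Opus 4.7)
The plan is to derive Conjecture~\ref{conj:PCN} by combining the bound $AT(p_G)\le 3$ with a paintability-from-Alon--Tarsi argument in the spirit of Theorem~\ref{thm:paintability}.

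First, I would establish $AT(p_G)\le 3$ for every connected graph $G\neq K_2$, where $p_G$ is the 1-2-3 polynomial introduced earlier. For $G$ not a tree with a perfect matching, the preceding Corollary gives $\ed(H(G))\le 1$, and applying Conjecture~\ref{conj:1} to the hypergraph polynomial $p_G=p_{H(G)}$ yields $AT(p_G)\le 2\lceil 1\rceil+1=3$. For trees $T\neq K_2$, the bound $AT(p_T)\le 3$ is Corollary~3 of Bartnicki, Grytczuk and Niwczyk~\cite{BGN}. Disconnected graphs reduce to their connected components, played independently.

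Second, I would adapt the paintability-from-Alon--Tarsi mechanism to $p_G$. I induct on the total remaining budget $\sum_{e\notin E_0}f(e)$, preserving the invariant that the partially evaluated polynomial $\tilde p_G := p_G\big|_{x_e=w(e),\,e\in E_0}$ admits a non-vanishing monomial $\prod_{e\notin E_0}x_e^{\alpha_e}$ with $\alpha_e\le f(e)-1$. Given a Lister move $(X,a)$ with $a\neq 0$, I form the operator
\[
T=\prod_{e\in X}\bigl(I-\Pi_{x_e}^{<\alpha_e}\bigr)\circ\prod_{e'\notin X\cup E_0}\Pi_{x_{e'}}^{\alpha_{e'}},
\]
apply it to $\tilde p_G$, and evaluate at $x_e=a$ for $e\in X$ and $x_{e'}=1$ otherwise. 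Homogeneity of $\tilde p_G$ collapses $T(\tilde p_G)$ to a nonzero scalar multiple of the target monomial, so the evaluation yields a nonzero field element. Expanding $T$ by linearity as a signed sum over $2^{|X|}$ subsets $X'\subseteq X$ (the identity $I$ being applied on $X'$, the projection on $X\setminus X'$) isolates at least one $X'$ whose contribution is nonzero; Painter commits to that $X'$. Since the projections on $X\setminus X'$ and on $e'\notin X\cup E_0$ commute with the substitution $x_e\mapsto a$ on $X'$, the residual polynomial $\tilde p_G\big|_{x_e=a,\,e\in X'}$ inherits a non-vanishing monomial with $\alpha'_e\le\alpha_e-1$ for $e\in X\setminus X'$ and $\alpha'_e\le\alpha_e$ elsewhere, keeping the invariant in force. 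When $E_0=E$, the non-vanishing of $p_G(w)$ is exactly the 1-2-3 condition on the produced weighting.

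The main obstacle is that $p_G$ is \emph{not} a coloring polynomial in the sense of the paper: the coefficients inside the bracket for $uv$ sum to $\deg(u)-\deg(v)$, typically nonzero. This invalidates both the ``$X'$ independent'' step and the factorization $p_H=p_{H-X'}q_H$ used in the proof of Theorem~\ref{thm:paintability}, so the adaptation must work with $\tilde p_G\big|_{x_e=a,\,e\in X'}$ directly rather than with a polynomial on a reduced subgraph, and one has to verify carefully that the projection/substitution commutation continues to single out a valid $X'$ at each round. A secondary subtlety is the case $a=0$, where $c\cdot a^{\sum_{e\in X}\alpha_e}$ vanishes and the expansion collapses; one sidestep is to restrict to $A\subseteq\F\setminus\{0\}$ (still infinite), and otherwise to supply an ad hoc strategy. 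Finally, Step~1 is conditional on the still-open Conjecture~\ref{conj:1}, so an unconditional proof of Conjecture~\ref{conj:PCN} would require either a direct bound on $AT(p_G)$ for non-tree connected graphs or a workaround bypassing the hypergraph density machinery altogether.
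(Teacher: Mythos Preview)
This statement is Conjecture~\ref{conj:PCN}; the paper does \emph{not} prove it. It is introduced in Section~\ref{sec:app} as an ``intriguing twin of Theorem~\ref{thm:paintability}'' and is restated verbatim among the open problems in Section~\ref{sec:final}. There is therefore no proof in the paper to compare your proposal against.

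Your overall plan---deduce $AT(p_G)\le 3$ from Conjecture~\ref{conj:1} (plus \cite{BGN} for trees) and then run a paintability-from-Alon--Tarsi argument---is the natural conditional route, and you correctly flag that it remains conditional on the open Conjecture~\ref{conj:1}.

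However, your execution of the second step has a genuine gap beyond the ones you already list. You write that ``homogeneity of $\tilde p_G$ collapses $T(\tilde p_G)$ to a nonzero scalar multiple of the target monomial.'' This is valid only in the very first round, when $E_0=\emptyset$ and $\tilde p_G=p_G$. After any substitution $x_e\mapsto w(e)$ the brackets become affine rather than linear, so $\tilde p_G$ is no longer homogeneous; then $T(\tilde p_G)$ is a genuine polynomial in the $X$-variables, and its value at $(a,\ldots,a,1,\ldots,1)$ can vanish even when the target coefficient is nonzero. In the paper's proof of the Lemma preceding Theorem~\ref{thm:paintability}, homogeneity is available at \emph{every} stage precisely because one never substitutes: one passes to the factor $p_{H-X'}$, which is again a homogeneous hypergraph polynomial. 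You (correctly) abandon that factorization since $p_G$ is not a coloring polynomial, but you do not replace the homogeneity input, and the inductive step as written does not go through. A repair would have to either invoke Schauz's paintability Nullstellensatz in its general polynomial form, or homogenize the substitutions (e.g.\ replace $x_e\mapsto w(e)$ by $x_e\mapsto w(e)\,t$ for an auxiliary variable $t$ and track a top-degree monomial in the enlarged variable set). Even with such a repair, the argument would remain conditional on Conjecture~\ref{conj:1} and would not settle Conjecture~\ref{conj:PCN} unconditionally.
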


\section{Remarks and open problems}\label{sec:final}

Recall that in our opinion, the most valuable achievement would be to prove the following hypothesis, implying in particular an immediate proof of the list version of the 1-2-3 Conjecture.

\conMain*

Even the following weaker version of Conjecture \ref{conj:1} would be a significant contribution.

\begin{conjecture}
For every hypergraph $H=(V,E)$ and fully unbalanced hypergraph polynomial $p$ of $H$ we have
$$
AT(p) \leqslant 2 \cdot \lceil {\rm ed}(H) \rceil +1.
$$
\end{conjecture}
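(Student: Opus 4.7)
The plan is to retain the skeleton of the proof of Theorem \ref{thm:main} but to strengthen Lemma \ref{lem:main} so that no permutation of the coefficients is ever needed. Setting $k:=\lceil\mathrm{ed}(H)\rceil$, Lemma \ref{lem:H-G} still produces a multigraph $G=(V,F)$ with $f_i\subseteq e_i$ and $\mathrm{ed}(G)\le k$, and Lemma \ref{lem:deg-den} gives $\delta(G)\le 2k$. After renaming the vertices so that a minimizing ordering for $\delta(G)$ is the identity, I would set $r(e_i):=\min(f_i)$, obtaining a system of representatives with $r(e_i)<\max(e_i)$ and $|r^{-1}(v)|\le 2k$ for every $v$. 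The target monomial $M:=\prod_i x_{r(e_i)}$ then has maximum exponent at most $2k$, so it would suffice to prove the following strengthening of Lemma \ref{lem:main}: for any fully unbalanced hypergraph polynomial $p$ of $H$ and any representative system $r$ with $r(e_i)<\max(e_i)$, the coefficient of $\prod_i x_{r(e_i)}$ in $p$ itself is already nonzero.

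I would attempt this by induction on $n+m$, paralleling the inductive step of Lemma \ref{lem:main}. Peeling off $e_m$ with $\max(e_m)=n$, the hypothesis gives a nonzero coefficient $b_{r(e_m)}$ of $M/x_{r(e_m)}$ in $p':=p/L_{e_m}$, so that the coefficient of $M$ in $p$ is $c=\sum_{j\in e_m}a_{e_m,j}\,b_j$, where $b_j$ denotes the coefficient of $M/x_j$ in $p'$ (and is $0$ if $x_j\nmid M$). Exactly as in Lemma \ref{lem:main}, we still have $b_n=0$ since $n$ does not appear in $M$, and $b_{r(e_m)}\ne 0$ by induction, while the fully unbalanced assumption supplies $j,j'\in e_m$ with $a_{e_m,j}\ne a_{e_m,j'}$. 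The difficulty is that the single linear relation $c=\sum_{j\in e_m}a_{e_m,j}\,b_j=0$ is perfectly consistent with all of these constraints, and unlike in Lemma \ref{lem:main} we have no freedom to swap two coefficients in the $e_m$-bracket to replace $c$ by a guaranteed-nonzero expression of the form $(a_{e_m,k}-a_{e_m,l})(b_l-b_k)$.

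This cancellation is what I expect to be the main obstacle, and overcoming it will almost certainly require abandoning the single-target-monomial strategy. The most promising route I see is to allow flexibility on the target side: consider the family of all representative systems $r:E\to V$ satisfying $|r^{-1}(v)|\le 2k$, and try to show that the resulting family of monomials cannot \emph{all} have vanishing coefficients in $p$, perhaps by summing their coefficients against carefully chosen weights so that the unbalanced hypothesis forces the total to be nonzero. An alternative, closer in spirit to Lemma \ref{lem:comp}, would be to replace the one-dimensional span of the single polynomial $p$ used in the non-permuted setting by a richer algebraic object, for example working modulo the ideal generated by the coefficient differences $a_{e,i}-a_{e,j}$ that unbalancedness guarantees in each bracket, and showing that in this quotient $p$ still dominates enough multigraph polynomials $q\in\mathcal{Q}$ to inherit the bound $AT(q)\le 2k+1$ supplied by Theorem \ref{thm:max-e}. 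In any event, a successful argument will likely have to use the unbalanced hypothesis nonlocally, treating all edges simultaneously rather than peeling them off one at a time as the induction in Lemma \ref{lem:main} does.
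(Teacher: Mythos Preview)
This statement is presented in the paper as an open \emph{conjecture} (in Section~\ref{sec:final}, as a weakening of Conjecture~\ref{conj:1}), not as a theorem; the paper contains no proof of it. So there is nothing on the paper's side to compare your attempt against, and your proposal should be read as a research plan rather than a proof.

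You have correctly located the obstruction. The skeleton you describe---Lemma~\ref{lem:H-G} to get the multigraph $G$, Lemma~\ref{lem:deg-den} to bound $\delta(G)\le 2k$, then the representative system $r(e_i)=\min(f_i)$ and the target monomial $M=\prod_i x_{r(e_i)}$---is exactly the scaffolding of Theorem~\ref{thm:main}, and your inductive peel of $e_m$ reproduces the setup of Lemma~\ref{lem:main}. The point where the argument stalls is also exactly right: without the freedom to apply a transposition $\sigma_{e_m}=(kl)$, the relation $c=\sum_{j\in e_m}a_{e_m,j}b_j=0$ is fully compatible with $b_{r(e_m)}\ne 0$, $b_n=0$, and the existence of $j,j'$ with $a_{e_m,j}\ne a_{e_m,j'}$. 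The paper's Claim~\ref{claim1} only delivers a pair $k,l$ with $a_{e_m,k}\ne a_{e_m,l}$ and $b_k\ne b_l$, which is useless if you cannot swap the $a$'s. This is precisely the gap the authors could not close.

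Your two suggested escape routes---varying over the whole family of admissible representative systems with $|r^{-1}(v)|\le 2k$ and arguing that not all of the corresponding coefficients can vanish, or working in a quotient that encodes the unbalancedness algebraically in the spirit of Lemma~\ref{lem:comp}---are reasonable directions but remain speculative; neither is carried out, and the paper offers no hint that either succeeds. In particular, note that Lemma~\ref{lem:comp} goes the wrong way for your purposes: it shows $\mathcal{Q}\subseteq\mathrm{span}(\mathcal{P}(p))$, which is why \emph{some} permuted $p'$ inherits the bound, but says nothing about the single unpermuted $p$. As written, your proposal is an accurate diagnosis of why the problem is open, not a proof.
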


Another significant result would be to prove the playable 1-2-3 Conjecture.
\conPlayable*

Since, to our knowledge, there is no constant bound known for the online version of the 1-2-3 Conjecture problem, even the following relaxation seems interesting.

\begin{conjecture}
Let $G=(V, E)$ be a graph without isolated edges. Then there exists a constant $c\in \mathbb{N}\cup\{0\}$ such that $G$ is $f$-unbalanceable where $f:E\rightarrow \mathbb{N}\cup\{0\}$ is defined by $f(e)=c$ for every $e\in E$.
\end{conjecture}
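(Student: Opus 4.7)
The plan is to connect $f$-unbalanceability to the Alon--Tarsi number of the polynomial
$$
p_G(x_{e_1},\ldots,x_{e_m}) \;=\; \prod_{uv\in E} \Bigl( \sum_{e\ni u,\, e\neq uv} x_e \;-\; \sum_{e\ni v,\, e\neq uv} x_e \Bigr)
$$
introduced in Section~\ref{sec:app}. Since $p_G(w)\neq 0$ characterizes valid 1-2-3 weightings $w$ of $G$, the $(f,A)$-unbalanceability game is precisely an online polynomial game for $p_G$ in which Painter commits values from $A$ to variables keeping $p_G$ nonzero at the final assignment, with the constraint that each edge $e$ can be presented at most $f(e)$ times before being painted. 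Treating components separately, we may assume $G$ is connected; since isolated edges are excluded, $G\neq K_2$.

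The first ingredient is an Alon--Tarsi bound $AT(p_G)\leq c(G)$ for some constant depending on $G$. When $G$ is not a tree with a perfect matching, the paper establishes $\mathrm{ed}(H(G))\leq 1$ via a Hall-theorem argument on the claw-free line graph $L(G)$. Applying Theorem~\ref{thm:max-e} to the hypergraph polynomial $p_G$ of $H(G)$ yields
$$
AT(p_G) - 1 \;\leq\; \delta(H(G)) \;\leq\; \max_{uv\in E}(\deg(u)+\deg(v)-2)\cdot \mathrm{ed}(H(G)) \;\leq\; 2\Delta(G)-2,
$$
so $c(G)=2\Delta(G)-1$ is admissible. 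For the remaining case in which $G$ is a tree $T\neq K_2$ with a perfect matching, the Bartnicki--Grytczuk--Niwczyk bound cited in the paper gives $AT(p_T)\leq 3$, hence $c(T)=3$ works.

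The second and main obstacle is to convert the Alon--Tarsi bound into the online statement that $G$ is $f$-unbalanceable with $f\equiv c(G)$. Theorem~\ref{thm:paintability} provides exactly this implication for \emph{coloring} polynomials, but $p_G$ is a coloring polynomial only when $G$ is regular (the coefficient sum in each bracket equals $\deg(u)-\deg(v)$). For non-regular $G$ one must adapt the projection-operator argument: the combinatorial requirement that the painted set $X'$ be independent in the hypergraph must be replaced by the algebraic condition that, after substituting $x_e\mapsto a$ for every $e\in X'$, the polynomial $p_G$ remain not identically zero in the remaining variables. This algebraic independence should drop out from the non-vanishing of the iterated projection built in the proof of Theorem~\ref{thm:paintability}, so the same induction on $n+m$ is expected to carry through; carefully verifying this adaptation is the chief technical step. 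Once it is established, the conjecture follows immediately by combining this online Alon--Tarsi extension with the bound $AT(p_G)\leq c(G)$ above.
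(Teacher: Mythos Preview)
This statement is listed in Section~\ref{sec:final} as an \emph{open conjecture}; the paper offers no proof to compare against. The sentence introducing it (``Since, to our knowledge, there is no constant bound known for the online version of the 1-2-3 Conjecture problem, even the following relaxation seems interesting'') makes clear that the intended reading is a \emph{universal} constant $c$, independent of $G$: this is meant as the relaxation of Conjecture~\ref{conj:PCN}, which demands $c=3$. Under that reading your argument does not prove the conjecture, since your bound $c(G)=2\Delta(G)-1$ grows with the maximum degree. That is the real gap.

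If instead one takes the literal wording and allows $c$ to depend on $G$, a proof along your lines works, but the step you flag as ``the chief technical step'' is unnecessary. The coloring-polynomial hypothesis $\sum_{i\in e}a_{e,i}=0$ in Theorem~\ref{thm:paintability} is used solely to guarantee that the chosen set $X'$ is \emph{independent} in the hypergraph, because that is what hypergraph paintability requires. The unbalanceability game imposes no independence condition on $X'$; Painter only needs $p_G(w)\neq 0$ at the end. This is exactly the setting of Schauz's paintability version of the Combinatorial Nullstellensatz \cite{Schauz,S2010}, which holds for \emph{arbitrary} polynomials: a nonzero monomial of multidegree $(\alpha_e)_{e\in E}$ in $p_G$ already gives Painter a winning strategy with $f(e)=\alpha_e+1$. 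Combined with the finite bound $AT(p_G)\le \delta(H(G))+1$ from Theorem~\ref{thm:max-e}, this settles the $G$-dependent version without any adaptation. That the authors nevertheless pose the conjecture as open is further evidence that the universal-constant interpretation is the one they have in mind.
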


\end{document}